\theoremstyle{thmstyleone}%
\newtheorem{theorem}{Theorem}
\newtheorem{proposition}[theorem]{Proposition}%
\newtheorem{lemma}[theorem]{Lemma}
\newtheorem{open}[theorem]{Open question}
\theoremstyle{thmstyletwo}%
\newtheorem{example}{Example}%
\theoremstyle{thmstylethree}%
\newcommand{\bs}[1]{\boldsymbol{#1}}
\newcommand{\ones}{\mathbf{1}}
\newcommand{\tran}{\mathsf{T}} 	
\newcommand{\diag}{\mathrm{diag}}
\newcommand{\J}{{\mathcal J}}
\newcommand{\ee}{\bs{e}}
\newcommand{\vv}{\boldsymbol{v}}
\newcommand{\uu}{\boldsymbol{u}}
\newcommand{\ww}{\boldsymbol{\vec{w}}}
\begin{document}
\title[Feasibility of sparse ecosystems]{Feasibility of sparse large Lotka-Volterra ecosystems}

\author*[1]{\fnm{Imane} \sur{Akjouj}}\email{imane.akjouj@univ-lille.fr}

\author*[2]{\fnm{Jamal} \sur{Najim}}\email{najim@univ-mlv.fr}


\affil[1]{\orgdiv{Laboratoire Paul Painlev\'e, UMR 8524}, \orgname{Universit\'e de Lille}, \orgaddress{\street{Avenue Paul Langevin, Cit\'e Scientifique}, \city{Villeneuve d'Ascq Cedex}, \postcode{59655}, \country{France}}}

\affil[2]{\orgdiv{Laboratoire d'Informatique Gaspard Monge, UMR 8049}, \orgname{CNRS \& Universit\'e Gustave Eiffel}, \orgaddress{\street{5, Boulevard Descartes, Champs-sur-Marne}, \city{Marne-la-Vall\'ee Cedex 2}, \postcode{77454} \country{France}}}


\abstract{Consider a large ecosystem (foodweb) with $n$ species, where the abundances follow a Lotka-Volterra system of coupled differential equations. We assume that each species interacts with $d=d_n$ other species and that their interaction coefficients are independent random variables. 
	
	This parameter $d$ reflects the connectance of the foodweb and the sparsity of its interactions especially if $d$ is much smaller that $n$.
	
	We address the question of feasibility of the foodweb, that is the existence of an equilibrium solution of the Lotka-Volterra system with no vanishing species. We establish that for a given range of $d$, namely $d\propto n$ or $d\ge \log(n)$ with an extra condition on the sparsity structure, there exists an explicit threshold depending on $n$ and $d$ and reflecting the strength of the interactions, which guarantees the existence of a positive equilibrium as the number of species $n$ gets large.   
	
	From a mathematical point of view, the study of feasibility is equivalent to the existence of a positive solution $\bs{x}_n$ (component-wise) to the equilibrium linear equation: 
	\begin{equation*}
	\bs{x}_{n} = \ones_n + M_n \bs{x}_n \,, 
	\end{equation*}
	
	where $\ones_n$ is the $n\times 1$ vector with components 1 and  $M_n$ is a large sparse random matrix, accounting for the interactions between species.
	The analysis of such positive solutions essentially relies on large random matrix theory for sparse matrices and Gaussian concentration of measure. The stability of the equilibrium is established.
	
	The results in this article extend to a sparse setting the results obtained by Bizeul and Najim in \cite{bib4}.
	}

\keywords{Theoretical ecology, Foodwebs, Feasibility and stability, Lotka-Volterra systems, Large random matrices, Gaussian concentration.}

\pacs[MSC Classification 2010]{Primary 15B52, 60G70, Secondary 60B20, 92D40.}


\maketitle

\section{Introduction}\label{sec:intro} 

\subsection*{Lotka-Volterra system of coupled differential equations.} Large Lotka-Volterra (LV) systems are widely used in mathematical biology and ecology to model pobulations with interactions \cite{bib26,bib25,bib27}. 

For a given foodweb, denote by $ \bs{x}_n=(x_{k}(t))_{1\leq k\leq n} $ the vector of abundances of the various species at time $t\geq 0$. 
In a LV system, the abundances are connected via the following coupled equations:
$$
\frac{dx_k(t)}{dt} = x_k(t)\, \left( r_k - x_k(t) +  \sum_{\ell=1}^n M_{k\ell} x_{\ell}(t)\right)\qquad \textrm{for}\quad k\in [n]:=\{1,\cdots, n\}\, ,
$$
where $M_n=(M_{k\ell})$ stands for the interaction matrix, and $r_k$ for the intrinsic growth of species $k$.
At the equilibrium $\frac{d\bs{x}_n}{dt}=0$, the abundance vector $\bs{x}_n=(x_k)_{k\in [n]}$ is solution of the system:
\begin{equation}\label{eq:equilibrium}
x_k\, \left( r_k - x_k +  \sum_{\ell\in[n]} M_{k\ell} x_{\ell}\right)=0\qquad \textrm{for}\quad x_k\ge 0\quad \text{and}\quad k\in [n]\ .
\end{equation}

An important question, which motivated recent developments \cite{bib7,bib4}, is the existence of a {\it feasible} solution $\bs{x}_n$ to \eqref{eq:equilibrium}, that is a solution where all the $x_k$'s are positive, corresponding to a scenario where no species disappears. Notice that in this latter case, the system \eqref{eq:equilibrium} takes the much simpler form:
$$
\bs{x}_n = \boldsymbol{r}_n + M_n \bs{x}_n\, ,
$$
where $\boldsymbol{r}_n=(r_k)$.

Aside from the question of feasibility arises the question of \textit{stability} : for a complex system, how likely a perturbation of the solution $\bs{x}_n$ at equilibrium  will return to the equilibrium? Gardner and Ashby \cite{bib8} considered stability issues of complex systems connected at random. Based on the circular law for large random matrices with i.i.d. entries, May \cite{bib15} provided a complexity/stability criterion and motivated the systematic use of large random matrix theory in the study of foodwebs, see for instance  Allesina et al. \cite{bib2}. Recently, Stone \cite{bib16} and Gibbs et al. \cite{bib11} revisited the relation between feasibility and stability. 

In the spirit of May\footnote{Beware that May did not consider LV systems but rather used a random matrix model for the Jacobian at equilibrium of a generic system of coupled differential equations.} and in the absence of any prior information, we shall model the interactions of matrix $M_n$ as random and in order to simplify the analysis, we will consider intrinsic growths $(r_i)_{i \in [n}$ equal to 1, and the equations under study will take the following form in the sequel:
\begin{equation}\label{eq:LV}
\frac{dx_k(t)}{dt} = x_k(t)\, \left( 1 - x_k(t) +  \sum_{\ell\in[n]} M_{k\ell} x_{\ell}(t)\right)\qquad \textrm{for}\quad k\in [n]\, .
\end{equation}

\subsection*{Sparse foodwebs}  One of the most important parameters of the complexity of an ecosystem is its connectance, which is the proportion of interactions between species (see for instance \cite{bib20}). This corresponds to the proportion of non-zero entries in the interaction matrix $M_n$. May's complexity/stability criterion asserts that the instability of an ecosystem increases with the connectance (i.e. the less sparse $M_n$ is, the more unstable is the ecosystem equilibrium). More specifically, \cite{bib21} specifies that the effect of the sparsity depends on the nature of the interactions (random, predator-prey, mutualistic or competitive). In the case of random interactions, \cite{bib23} supports the idea that sparse ecosystems lead to a stable equilibrium. Based on ecological and biological data (see for instance \cite{bib22}), recent studies \cite{bib6} suggest that foodwebs can actually be very sparse. In a recent theoretical study, \cite{bib31} study the properties of sparse ecological communities in relation with the strength of interactions.

To encode this sparsity in a simple parametric way, we first consider a directed $d_n$-regular graph with $n$ vertices and its associated 
$n\times n$ adjacency matrix $\Delta_n=(\Delta_{ij})$:  
$$
\Delta_{ij} = \begin{cases}
1 & \text{if there is an edge pointing from}\ i\ \text{to}\ j\,,\\
0 & \text{otherwise.}
\end{cases}
$$
In the considered graph, each vertex $i$ has $d_n$ edges pointing from a vertex $k\in [n]$ to $i$, and has $d_n$ other edges pointing from $i$ to a vertex $\ell\in [n]$. An edge pointing from $i$ to $i$ is called a loop. 
In particular, matrix $\Delta_n$ is deterministic, has exactly $d_n$ non-null entries per row and per column, and $n\times d_n$ non-null entries overall. 

Denote by $A_n$ a $n\times n$ matrix with independent Gaussian ${\mathcal N}(0,1)$ entries and consider the Hadamard product matrix
$
\Delta_n\circ A_n =(\Delta_{ij} A_{ij})
$. Let $(\alpha_n)_{n\ge 1}$ be a positive sequence. We assume that matrix $M_n$ has the following form
\begin{equation}\label{eq:model-M}
M_n = \frac{\Delta_n \circ A_n}{\alpha_n \sqrt{d_n}}\, .
\end{equation}
Let us comment on the normalizing factor $1/(\alpha_n \sqrt{d_n})$. Theoretical results on sparse large random matrices \cite{bib1} assert that asymptotically
$$
\left\| \frac{\Delta_n\circ A_n}{\sqrt{d_n}}\right\| ={\mathcal O}(1)\,,\quad (n\to\infty) 
$$
where $\|\cdot\|$ stands for the spectral norm, if the degree $d_n$ of the graph satisfies $d_n\ge \log(n)$, a condition that we will assume in the remaining of the article. In particular, normalization $1/\sqrt{d_n}$ guarantees that matrix $\Delta_n\circ A_n/\sqrt{d_n}$ has a macroscopic effect in the LV system, even for large foodwebs (large $n$).

The extra normalization $1/\alpha_n$ is to be tuned to get a feasible solution. 

Denote by $\boldsymbol{1}_n$ the $n\times 1$ vector of ones and by $A^{\tran}$ the transpose of matrix $A$. In the full matrix case $\Delta_n =\ones_n \ones_n^{\tran}$, \cite{bib7}, based on  \cite{bib10}, proved that a feasible solution is very unlikely to exist if $\alpha_n\equiv\alpha$ is a constant. We thus consider the regime where $\alpha_n\to\infty$ and will prove that there is a sharp threshold $\alpha_n\sim \sqrt{2\log(n)}$ above which a feasible solution exists (with high probability) and below which does not. This phase transition has already been established in \cite{bib4} for the full matrix case.

One can notice that, in sparse foodwebs ($d_n<n$), the interaction coefficients can be stronger than when the interaction matrix is full (i.e. when $d_n=n$) in the sense that $\frac{1}{\sqrt{d_n}}>\frac 1{\sqrt{n}}$. 

\subsection*{Models and feasibility results} The sparse random matrix model under investigation is given in \eqref{eq:model-M}.
Specifying the range of $d_n$ and the structure of $\Delta_n$, we introduce hereafter two models amenable to analysis.

\subsubsection*{\textbf{Model (A)}: Block permutation matrix.} 
Let $n=d\times m$. Denote by ${\mathcal S}_m$ the group of permutations of $[m]=\{1,\dots, m\}$. Given $\sigma\in \mathcal{S}_m$, consider the associated permutation matrix 
$$
P_{\sigma}=(P_{ij})_{i,j\in [m]} \quad \text{where}\quad P_{ij}=\begin{cases}
1&\textrm{if}\ j=\sigma(i),\\
0&\textrm{else.}
\end{cases}
$$
Denote by $J_d={\bf 1}_d {\bf 1}_d^{\tran}$ the $d\times d$ matrix of ones.
Assume that 
\begin{itemize}[label=-]
	\item matrix $M_n$ is given by \eqref{eq:model-M},
	\item $d=d_n\ge \log(n)$,
	\item matrix $\Delta_n$ introduced in \eqref{eq:model-M} is a {\it block-permutation} adjacency matrix given by 
	\begin{equation}\label{eq:block-permutation}
	\Delta_n = P_{\sigma}\otimes J_d = \left( P_{ij} J_d\right)_{i,j\in [m]}\, ,
	\end{equation}
	where $\otimes$ is the Kronecker matrix product.
\end{itemize}


Notice that $\Delta_n$ still corresponds to the adjacency matrix of a $d$-regular graph.

\begin{example}\label{example} To illustrate these definitions, we provide an example. Let $n=m\times d$ with $m=4$ and $\sigma\in {\mathcal S}_4$ defined by
	$$
	\sigma=\begin{pmatrix}
	1&2&3&4\\
	1&4&2&3
	\end{pmatrix}\ .
	$$
	Matrices $P_\sigma$, $\Delta$ and $\Delta\circ A$ are respectively given by:
	$$
	P_\sigma =  \begin{pmatrix} 1 & 0 & 0 & 0 \\ 0 & 0 & 0 & 1 \\ 0 & 1 & 0 & 0 \\ 0 & 0 & 1 & 0  \end{pmatrix}\,,\quad 
	\Delta =\begin{pmatrix}
	J_d & 0 & 0 & 0 \\ 0 & 0 & 0 & J_d \\ 0 & J_d & 0 & 0 \\ 0 & 0 & J_d & 0 
	\end{pmatrix}\,,\quad
	\Delta \circ A =  \begin{pmatrix} A^{(1)} & 0 & 0 & 0 \\ 0 & 0 & 0 & A^{(2)} \\ 0 & A^{(3)} & 0 & 0 \\ 0 & 0 & A^{(4)} & 0  \end{pmatrix}\, ,
	$$
	where $A^{(\mu)}$ ($\mu\in[4]$) is a $d\times d$ matrix with i.i.d. ${\mathcal N}(0,1)$ entries.
\end{example}

\subsubsection*{\textbf{Model (B)}: $d$ is proportional to $n$.}
Assume that $M_n$ is given by \eqref{eq:model-M} and that $d=d_n$ satisfies
\begin{equation}\label{eq:dpropn}
\lim_{n\to\infty} \frac{d_n}{n} = \beta>0\, .
\end{equation}

We can now state the main result of the article:
\begin{theorem}\label{th:sparse-matrix}
	Let $A_n$ be a $n\times n$ matrix with i.i.d. ${\mathcal N}(0,1)$ entries and $\Delta_n$ given by Model (A) or (B).
	Assume that $\alpha_n \xrightarrow[n\to\infty]{} \infty$  and denote by $$
	\alpha_n^*=\sqrt{2\log n}\ .
	$$ 
	Let $\bs{x}_{n}=(x_k)_{ k\in [n] }$ be the solution of 
	\begin{equation}\label{eq:equilibrium-A}
	\bs{x}_{n} = \ones_n + \frac{1}{\alpha_n\sqrt{d_n}} \left( \Delta_n \circ A_n\right) \bs{x}_{n}\ .
	\end{equation}
	Then
	\begin{enumerate}
		\item If $\exists\, \varepsilon>0$ such that eventually $\alpha_n\le (1-\varepsilon) \alpha_n^*$ then 
		$$
		\mathbb{P}\left\{ \min_{k\in [n]} x_k>0\right\} \xrightarrow[n\to\infty]{} 0 \, ,
		$$		
		\item If $\exists\, \varepsilon>0$ such that eventually $\alpha_n\ge (1+\varepsilon)\alpha_n^*$ then
		$$
		\mathbb{P}\left\{ \min_{k\in [n]} x_k>0\right\} \xrightarrow[n\to\infty]{} 1\, .
		$$
	\end{enumerate}
\end{theorem}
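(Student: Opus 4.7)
The plan is to expand $\bs{x}_n$ as a Neumann series around $\ones_n$, isolate the first-order Gaussian fluctuation (which will dictate the threshold), and show that everything else is negligible in $\ell^\infty$. Under the standing assumption $d_n \ge \log n$, the sparse spectral bound recalled in \cite{bib1} gives $\|M_n\| \le C/\alpha_n$ with high probability, so $I - M_n$ is invertible and
\[
\bs{x}_n \;=\; (I - M_n)^{-1}\ones_n \;=\; \ones_n + \uu + \bs{r}_n, \qquad \uu := M_n \ones_n, \qquad \bs{r}_n := \sum_{k \ge 2} M_n^{k} \ones_n .
\]
Feasibility is then equivalent to $\min_k \bigl(1 + (\uu)_k + (\bs{r}_n)_k\bigr) > 0$.

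First I would analyze $\uu$. The key structural observation is that $(\uu)_i = (\alpha_n \sqrt{d_n})^{-1}\sum_\ell \Delta_{i\ell} A_{i\ell}$ depends only on the $i$-th row of $A_n$, so the coordinates of $\uu$ are \emph{mutually independent}; and since every row of $\Delta_n$ has exactly $d_n$ ones, each $(\uu)_i$ is centered Gaussian with variance $\alpha_n^{-2}$. Classical extreme-value theory for $n$ i.i.d.\ $\mathcal{N}(0,1)$ variables then gives
\[
\alpha_n \min_{k \in [n]} (\uu)_k \;=\; -\sqrt{2\log n}\,\bigl(1 + o_{\mathbb{P}}(1)\bigr) .
\]
Comparing with $\alpha_n^\ast = \sqrt{2\log n}$, in regime~(1) some coordinate of $\ones_n + \uu$ becomes strictly negative, while in regime~(2) every coordinate stays bounded away from $0$, each with probability tending to~$1$.

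The core of the argument is thus the uniform remainder bound $\|\bs{r}_n\|_\infty = o_{\mathbb{P}}(1)$, which transfers the above dichotomy from $\ones_n + \uu$ to $\bs{x}_n$. For the dominant contribution $M_n^2 \ones_n$, each coordinate $(M_n^2 \ones_n)_i = (\alpha_n^2 d_n)^{-1}\sum_{j,\ell}\Delta_{ij}\Delta_{j\ell} A_{ij} A_{j\ell}$ is a quadratic form in independent Gaussians supported on at most $d_n^2$ pairs; conditioning on the rows of $A_n$ other than the $i$-th, the expression becomes Gaussian in $A_{i,\cdot}$ with conditional variance concentrated at $\alpha_n^{-4}$, so a Hanson--Wright estimate together with a union bound yields $\|M_n^2 \ones_n\|_\infty = O_{\mathbb{P}}(\sqrt{\log n}/\alpha_n^2)$, which is $o(1)$ as soon as $\alpha_n \gtrsim \sqrt{\log n}$. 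For $k \ge 3$, I would combine the operator-norm decay $\|M_n^k\| \le \|M_n\|^k = O(\alpha_n^{-k})$ with Gaussian concentration applied to the Lipschitz map $A_n \mapsto (M_n^k \ones_n)_i$, whose Lipschitz constant is itself controlled by iterated spectral norms; summing the resulting geometric series gives the required $\ell^\infty$ bound. The split between Model~(A) and Model~(B) enters this step only through the precise sparse spectral estimate invoked: in Model~(A) the block structure $\Delta_n = P_\sigma \otimes J_d$ reduces $\Delta_n \circ A_n$ to independent $d \times d$ Ginibre blocks after permutation, whereas in Model~(B) the regime $d_n \propto n$ is covered by a standard Wigner/Girko bound.

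Finally, combining the extreme-value asymptotics for $\uu$ with $\|\bs{r}_n\|_\infty = o_{\mathbb{P}}(1)$ yields both parts of the theorem. The main obstacle I foresee is precisely the uniform $\ell^\infty$ control of $\bs{r}_n$ in the sparse regime $d_n \sim \log n$, where crude $\ell^2 \to \ell^\infty$ conversions waste a factor $\sqrt{n}$ and a more delicate Gaussian-concentration or leave-one-out argument is required; once this is in hand, the comparison with $\alpha_n^\ast$ is essentially the full-matrix argument of \cite{bib4}.
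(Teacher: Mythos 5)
Your skeleton coincides with the paper's: the decomposition $\bs{x}_n=\ones_n+M_n\ones_n+\bs{r}_n$, the observation that the first-order coordinates are i.i.d.\ $\mathcal{N}(0,\alpha_n^{-2})$, the Gumbel asymptotics for their minimum, and the reduction of the theorem to an $\ell^\infty$ control of the remainder via spectral-norm truncation and Gaussian concentration. However, the proposal stops exactly where the paper's work begins, and two genuine gaps remain. First, the Lipschitz constant of $A\mapsto \ee_k^{\tran}M_n^{\ell}\ones$ computed from ``iterated spectral norms'' unavoidably carries the factor $\|\ones\|=\sqrt{n}$, which after normalization leaves a constant of order $\sqrt{n/d}$; concentration then yields a bound of order $\sqrt{(n/d)\log n}$ on the maximum of the remainder, which is acceptable for Model (B) ($d\propto n$) but useless for Model (A) with $d\sim\log n$. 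You name this obstacle but do not resolve it. The paper's resolution is not a sharper spectral estimate but a structural one: under Model (A) every power $(\Delta\circ A)^{\ell}$ is again a block-permutation matrix with only $d$ nonzero entries per row, so $\ones$ can be replaced by a $0$--$1$ vector of norm $\sqrt{d}$ (and a similar localization, Eq.~\eqref{eq:tricky-estimate}, is needed inside the gradient computation). This is the sole reason the block structure is assumed; it plays no role in the spectral norm bound, contrary to what your split between Models (A) and (B) suggests.

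Second, Gaussian concentration only controls $\max_k(\tilde R_k-\mathbb{E}\tilde R_k)$; you must still bound the means $\mathbb{E}\tilde R_k$ uniformly in $k$, and after truncation these are not obviously small (a crude bound on the truncation event is of order $\sqrt{n}$, or $\sqrt{d}$ even using the block structure). The paper devotes Proposition~\ref{prop:control_exp} (block-exchangeability) to this for Model (A) and Proposition~\ref{prop:control_exp_modelB} (Gaussian integration by parts, requiring the a.s.\ simplicity of the singular values of $\Delta\circ A$ proved in Appendix~\ref{app:simplicity}) for Model (B); your proposal omits this step entirely. A further, fixable, imprecision: the target $\|\bs{r}_n\|_\infty=o_{\mathbb{P}}(1)$ is actually false in regime (1) when $\alpha_n\to\infty$ slowly (already the $\ell=2$ term is of order $\sqrt{\log n}/\alpha_n^2$, which diverges for $\alpha_n\ll(\log n)^{1/4}$); the correct and sufficient statement is the relative bound $\max_k\lvert R_k\rvert=o_{\mathbb{P}}(\alpha_n\alpha_n^*)$ of Lemma~\ref{lemma:main}, i.e.\ the remainder must be compared to $\alpha_n^*/\alpha_n$ rather than to $1$.
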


The results of Theorem \ref{th:sparse-matrix} are illustrated in Fig. \ref{fig:simu-model-A}.

\subsubsection*{Remarks} 
\begin{enumerate}
	\item  By taking $d_n\ge \log(n)$, we guarantee that the spectral norm of matrix $\frac{\Delta_n \circ A_n}{\sqrt{d_n}}$ is of order ${\mathcal O}(1)$, see \cite{bib1}. In particular, matrix $\left( I_n - \frac{\Delta_n \circ A_n}{\alpha_n \sqrt{d_n}}\right)$ is invertible and the solution $\bs{x}_n$ can be represented as:
	$$
	\bs{x}_n = \left( I_n - \frac{\Delta_n \circ A_n}{\alpha_n \sqrt{d_n}}\right)^{-1} \ones_n\ .
	$$
	
	\item An informal first-order expansion of the solution immediatly explains this phase transition.
	If we expand the inverse matrix and neglect the remaining terms, we get
	$$
	\bs{x}_n \ \simeq\ \ones_n + \frac{\Delta_n \circ A_n}{\alpha_n \sqrt{d_n}}\ones_n \ =\ \ones +\frac{{\bs z}_n}{\alpha_n}
	$$
	where
	$$
	{\bs z}_n= (z_i)\quad \text{and}\quad z_i =\sum_{j=1}^n \frac{(\Delta_n \circ A_n)_{ij}}{\sqrt{d_n}}\, .
	$$
	Notice that the $z_i$'s remain i.i.d. ${\mathcal N}(0,1)$. Going one step further in the approximation yields $$\min_{i\in [n]} x_i\ \simeq\ 1+\frac{\min_{i\in[n]} z_i}{\alpha_n}\, .$$ 
	By standard extreme value results, we have $\min_{i\in [n]} z_i \sim -\sqrt{2\log(n)}$, hence the phase transition.
	\\
	\item The component-wise positivity of the solution has been studied in the full matrix case, i.e. $\Delta_n =\ones_n \ones_n^{\tran}$ and $d_n =n$, in \cite{bib4} where the same phase transition phenomenon occurs. Proof of Theorem \ref{th:sparse-matrix} can be handled as in \cite{bib4} for Model (B) with non-trivial adaptations that will be specified.
	
	In the case where $d_n \ll n$, a normalization issue occurs. To say it roughly, the Euclidian norm of vector $\ones_n/\sqrt{d_n}$ is no longer of order ${\mathcal O}(1)$ but of order $\sqrt{n/d_n}$ and one needs to handle more carefully the sparsity of matrix $\Delta_n$.  
	
	In this regard, the block-permutation structure of Model (A) is a technical and simplifying assumption. The problem of the component-wise positivity of $\bs{x}_n$ for a general adjacency matrix $\Delta_n$ of a $d$-regular graph with $d\ge \log(n)$ remains open.
\end{enumerate}

\begin{figure}[h]
	\centering
	\includegraphics[scale=0.7]{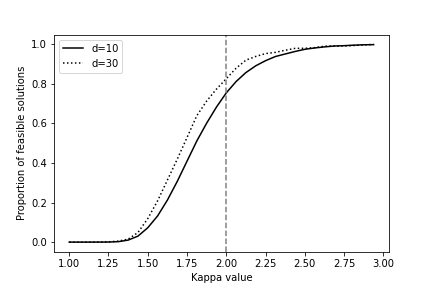}
	\caption{Let $n=15000$ with $\log(n)\simeq 9.61$. For $d=10$ and $m=1500$, we first draw at random a permutation $\sigma\in {\mathcal S}_m$ and fix $\Delta_n=P_{\sigma} \otimes \bs{1}_d\bs{1}_d^\tran$ once for all. Each point of the solid line represents the proportion of feasible solutions $\bs{x}_n$ of \eqref{eq:equilibrium-A} over $2000$ realizations of random matrices $A_n$ for different values of $\kappa$, with $\alpha_n=\sqrt{\kappa \log(n)}$. The same simulation is realized with $d=30$ over 500 realizations of $A_n$ (dotted line).} \label{fig:simu-model-A}
\end{figure}

\subsection*{Stability results} 
A classical property of \eqref{eq:LV} is the positivity of the orbits\footnote{Beware that this property does not prevent some components $x_i(t)$ to converge to zero, hence does not enforce a feasible equilibrium.}: if $\bs{x}_n^0\in (\mathbb{R}^{*+})^n$, then $\bs{x}_n^t\in (\mathbb{R}^{*+})^n$ as well $(t>0)$.

We first recall definitions related to stability from \cite[Chapter 3]{bib19}. An equilibrium $\bs{x}_n$ is \textbf{\textit{stable}} if for any given neighborhood $W$ of $\bs{x}_n$, there exists a neighborhood $V$ such that for any initial point $\bs{x}_n^0\in V$, the orbit $\{\bs{x}_n^t;\ t\ge 0;\ \bs{x}_n^0\in V\}$ stays in $W$. In addition, if the equilibrium is stable and the orbit converges to $\bs{x}_n$, the equilibrium is said \textbf{\textit{asymptotically stable}}.

In the full matrix case ($\Delta_n =\ones_n \ones_n^{\tran},\ d_n =n$), it has been proved in \cite{bib4} that in the regime where feasibility occurs, the system is asymptotically stable in the sense that the Jacobian matrix ${\mathcal J}$ of the LV system \eqref{eq:LV} evaluated at $\bs{x}_n$:
\begin{equation}\label{eq:jacobian}
\J(\bs{x}_n) = \diag(\bs{x}_n)\left(-I_n + M_n\right)
\end{equation}
has all its eigenvalues with negative real part.

Finally, the equilibrium is \textbf{\textit{globally stable}}  when it is asymptotically stable and the neighborhood $V$ can be taken as the whole state place $(\mathbb{R}^{*+})^n$. 

We complement Theorem \ref{th:sparse-matrix} and prove that feasibility and global stability occur simultaneously.

\begin{theorem}[Global stability, Takeuchi and Adachi {\cite[Theorem 3.2.1]{bib19}}]\label{th:global stability} 
	Let $d_n\ge \log(n)$, $\alpha_n \xrightarrow[n\to\infty]{} \infty$, and $\Delta_n$ the adjacency matrix of a $d_n$-regular graph. Then, with probability going to one as $n\to \infty$, Eq. \eqref{eq:equilibrium} admits a unique nonnegative solution $\bs{x}_n$. Moreover, this solution is a globally stable equilibrium. 
\end{theorem}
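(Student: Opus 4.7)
The strategy is to reduce the statement to the deterministic criterion of Takeuchi and Adachi: if there exists a positive diagonal matrix $D$ such that $D(-I_n+M_n) + (-I_n+M_n)^\tran D$ is negative definite (i.e.\ $B_n := -I_n+M_n$ is Volterra-Lyapunov stable), then \cite[Theorem 3.2.1]{bib19} yields simultaneously the existence and uniqueness of a nonnegative equilibrium of \eqref{eq:LV} and its global asymptotic stability in the positive orthant. The probabilistic content of Theorem~\ref{th:global stability} therefore reduces to checking this spectral condition on an event of probability tending to one.

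To that end I would invoke the operator-norm estimate for sparse Gaussian matrices from \cite{bib1}: under $d_n\ge \log n$ there exists a constant $C$ with
\begin{equation*}
\mathbb{P}\left\{\left\| \frac{\Delta_n\circ A_n}{\sqrt{d_n}}\right\| \le C\right\} \xrightarrow[n\to\infty]{} 1\,.
\end{equation*}
Combined with $\alpha_n\to\infty$, this gives $\|M_n\|\le C/\alpha_n \to 0$ in probability. Choosing $D=I_n$, the symmetric matrix
\begin{equation*}
D B_n + B_n^\tran D \;=\; -2I_n + M_n + M_n^\tran
\end{equation*}
has largest eigenvalue at most $-2 + 2\|M_n\|$, which is strictly negative on the event $\{\|M_n\|<1\}$; this event has probability tending to one. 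Volterra-Lyapunov stability of $B_n$ is thereby verified with high probability, and \cite[Theorem 3.2.1]{bib19} delivers on the same event the existence, uniqueness, and global asymptotic stability of the nonnegative equilibrium.

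There is no substantive obstacle beyond quoting the right result. The random-matrix input is entirely encapsulated in the spectral bound of \cite{bib1}, while the dynamical content rests on the classical Goh-type Lyapunov function $V(\bs{x}) = \sum_i (x_i - x_i^* - x_i^*\log(x_i/x_i^*))$ used in the proof of \cite[Theorem 3.2.1]{bib19}. The only point to double-check is that our LV system \eqref{eq:LV}, with intrinsic growth vector $\ones_n$ and interaction matrix $B_n = -I_n+M_n$, falls under the framework $\dot x_i = x_i(r_i + \sum_j B_{ij} x_j)$ considered there; this is immediate. Notice that the choice $D=I_n$ is very wasteful: the argument in fact works as soon as $\|M_n\|$ is eventually bounded by one, so the assumption $\alpha_n\to\infty$ is much stronger than what the stability statement alone would require.
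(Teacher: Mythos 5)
Your proposal is correct and follows essentially the same route as the paper: both reduce the statement to Volterra--Lyapunov stability of $-I_n+M_n$ via Takeuchi--Adachi, take $D=I_n$, and verify negative definiteness of $-2I_n+M_n+M_n^\tran$ on the high-probability event $\{\|M_n\|<1\}$ furnished by the sparse-matrix norm bound (Proposition~\ref{prop:spectral-norm}, itself based on \cite{bib1}). Your closing remark that $\alpha_n\to\infty$ is stronger than needed for stability alone is a fair observation, but otherwise there is nothing to add.
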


Beware that in this theorem, the solution, although unique, is no longer (component-wise) positive and may have zero components corresponding to vanishing species. Notice that the assumption over $\Delta_n$ covers Models (A) and (B) but is far less restrictive. We illustrate Theorem \ref{th:global stability} in Fig. \ref{fig:vanish}.

\begin{figure}[h]
	
	\begin{subfigure}{.46\linewidth}\label{subfig:A}
		\includegraphics[scale=0.4]{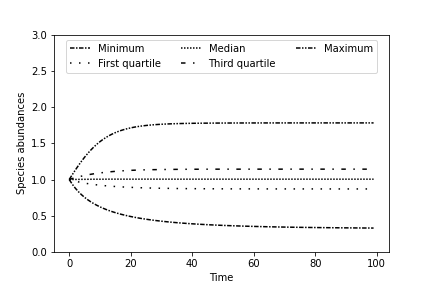}
		\caption{Feasible equilibrium for $\alpha_n=\sqrt{3\log(n)}$. }
	\end{subfigure}%
	\hspace*{\fill}   
	\begin{subfigure}{0.46\textwidth}\label{subfig:B}
		\includegraphics[scale=0.4]{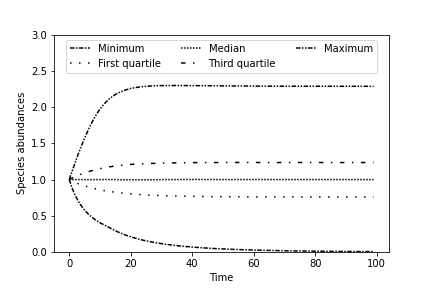}
		\caption{Vanishing species for $\alpha_n=\sqrt{\log(n)}$. }
	\end{subfigure}%

	\caption{LV system with feasible equilibrium (left) and vanishing species (right): minimum, maximum and mean of the population dynamics $(\bs{x}_n^t,\, t>0)$ solution of \eqref{eq:LV} for $n=5000$ ($\log(n)\simeq 8.51$), $d=10$ and $\Delta_n$ follows Model (A). In the first figure, $\alpha_n>\sqrt{2\log(n)}$, the minimum abundance remains positive. In the second one, $\alpha_n<\sqrt{2\log(n)}$, the minimum abundance vanishes and the equilibrium is not feasible.}   \label{fig:vanish}
\end{figure}

We now specify Theorem \ref{th:global stability} in the case of feasibility. 


\begin{proposition}[Stability and convergence rate]\label{prop:stability} 
	Let $d_n\ge \log(n)$, $\alpha_n \xrightarrow[n\to\infty]{} \infty$, and assume that $\Delta_n$ is given by Model (A) or (B). Denote by $\Sigma_n$ the spectrum of the Jacobian matrix $\J(\bs{x}_n)$ given by \eqref{eq:jacobian}.
	
	Assume that there exists $\varepsilon>0$ such that eventually $\alpha_n\ge (1
	+\varepsilon) \alpha_n^*$. Then: 
	\begin{enumerate}
		\item The probability that the equilibrium $\bs{x}_n$ is feasible and globally stable converges to 1,
		\item The spectrum $\Sigma_n$ asymptotically coincides with $-\diag(\bs{x}_n)$ in the sense that:
		$$
		\max_{\lambda\in \Sigma_n}  \min_{k\in [n]} \left\lvert \lambda + x_k\right\rvert \ \xrightarrow[n\to\infty]{\mathcal P}\ 0\, ,
		$$    
		\item Moreover,
		\begin{equation}\label{eq:stability}
		\max_{\lambda\in \Sigma_n}\mathrm{Re}(\lambda) \ \le\  -(1-\ell^+) + o_P(1)\qquad \textrm{where}\qquad 
		\ell^+:=\limsup_{n\to \infty}\frac{\alpha^*_n}{\alpha_n}<1\, .
		\end{equation}
	\end{enumerate}
\end{proposition}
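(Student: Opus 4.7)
The plan is to prove the three assertions in sequence. Assertion (1) combines Theorem~\ref{th:sparse-matrix}(2), which yields feasibility with probability tending to one under $\alpha_n \ge (1+\varepsilon)\alpha_n^*$, with Theorem~\ref{th:global stability}, whose assumptions ($d_n \ge \log n$, $\alpha_n \to \infty$, $\Delta_n$ the adjacency matrix of a $d_n$-regular graph) are satisfied in either Model (A) or (B). The unique nonnegative equilibrium produced by the latter is thus the feasible $\bs{x}_n$, which is globally stable.

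For assertion (2), I would write $\J(\bs{x}_n) = -\diag(\bs{x}_n) + \diag(\bs{x}_n)\,M_n$, exhibiting the Jacobian as a perturbation of the diagonal matrix $-\diag(\bs{x}_n)$ whose spectrum equals $\{-x_k : k\in [n]\}$. The Bauer--Fike theorem applied to the diagonal unperturbed part (whose eigenvector matrix is the identity, with condition number one) yields, for every $\lambda \in \Sigma_n$,
\[
\min_{k\in [n]} |\lambda + x_k| \;\le\; \|\diag(\bs{x}_n)\, M_n\| \;\le\; \|\diag(\bs{x}_n)\|\cdot\|M_n\|.
\]
Now $\|M_n\| = \mathcal{O}_P(1/\alpha_n)$ by the spectral-norm estimate for $(\Delta_n\circ A_n)/\sqrt{d_n}$ from \cite{bib1} recalled in the introduction, while $\|\diag(\bs{x}_n)\| = \max_k x_k$ is bounded with high probability by the same Gaussian concentration arguments that underlie Theorem~\ref{th:sparse-matrix}. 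Taking the supremum over $\lambda$ gives the $o_P(1)$ convergence.

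For assertion (3), letting $k(\lambda)\in [n]$ attain the minimum in (2), I would estimate
\[
\mathrm{Re}(\lambda) \;=\; -x_{k(\lambda)} + \mathrm{Re}(\lambda + x_{k(\lambda)}) \;\le\; -\min_{k\in [n]} x_k + |\lambda + x_{k(\lambda)}|,
\]
so that by (2), $\max_{\lambda\in \Sigma_n}\mathrm{Re}(\lambda) \le -\min_k x_k + o_P(1)$. It then suffices to show $\min_k x_k \ge 1 - \ell^+ + o_P(1)$. The first-order expansion of Remark~2 gives $x_k \simeq 1 + z_k/\alpha_n$ with the $z_k$ i.i.d.\ standard Gaussians, and classical extreme-value asymptotics yield $\min_k z_k = -\alpha_n^*(1 + o_P(1))$, so $\min_k x_k = 1 - \alpha_n^*/\alpha_n + o_P(1)$; combining with $\alpha_n^*/\alpha_n \le \ell^+ + o(1)$ from the definition of the limsup gives the required bound. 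The main obstacle is making this expansion rigorous \emph{uniformly} over $k$: one must control the Neumann remainder $\bs{x}_n - \ones_n - M_n\ones_n$ componentwise as $o_P(1/\alpha_n)$ so that neither the minimum nor the maximum over $k$ is corrupted by lower-order terms. This is precisely where the sparsity structure of Model (A) or (B) is used, together with the Gaussian concentration estimates developed in the proof of Theorem~\ref{th:sparse-matrix}, which I would invoke rather than reprove.
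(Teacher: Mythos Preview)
Your proposal is correct and aligns with the paper's approach. The paper itself proves part (1) exactly as you do---combining Theorem~\ref{th:global stability} with Theorem~\ref{th:sparse-matrix}(2)---and then defers parts (2) and (3) entirely to \cite[Corollary~1.4]{bib4}, so your outline in fact supplies more detail than the paper does. Your use of Bauer--Fike for (2), together with the bounds $\|M_n\|=\mathcal{O}_P(1/\alpha_n)$ from Proposition~\ref{prop:spectral-norm} and $\max_k x_k = \mathcal{O}_P(1)$ from the decomposition \eqref{eq:decomp} and Lemma~\ref{lemma:main}, is precisely the natural route; likewise your derivation of (3) via $\min_k x_k \ge 1 - \alpha_n^*/\alpha_n + o_P(1)$ is exactly what the inequalities around \eqref{eq:system}--\eqref{eq:MIN} and Lemma~\ref{lemma:main} deliver, so there is no need to redo the uniform remainder control you flag as the main obstacle.
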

As a consequence of \eqref{eq:stability}, for any $\bs{x}_n^0\in (\mathbb{R}^{+*})^n$, the orbit $\bs{x}_n^t$ converges to the equilibrium $\bs{x}_n$ at an exponential convergence rate, see Fig. \ref{fig:convergence-equilibrium}-(A).

\begin{figure}[h]
	\begin{subfigure}{.46\linewidth}
		\includegraphics[scale=0.4]{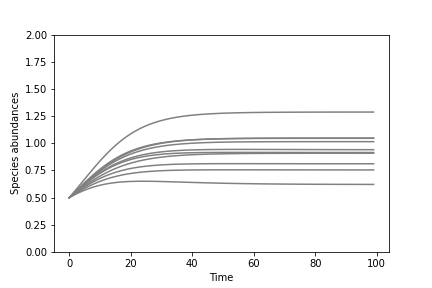}
		\caption{Population dynamics with starting abundances equals to $\frac{1}{2}$. }
	\end{subfigure}%
	\hspace*{\fill}   
	\begin{subfigure}{.46\linewidth}
		\includegraphics[scale=0.4]{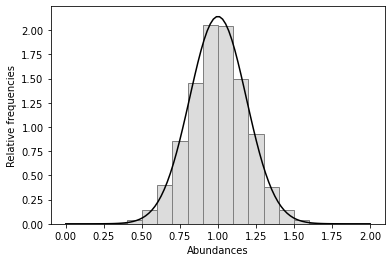}
		\caption{Histogram of the equilibrium abundances.}
	\end{subfigure}%
	\caption{Consider the population dynamics $(\bs{x}_n^t,\, t>0)$ solution of \eqref{eq:LV} where $M$ is given by \eqref{eq:model-M} and $\Delta_n$ follows Model (A) with $n=15000$ species, $m=1500$ blocks, $d=10>\log(n)\simeq 9.62$ and $\alpha_n=\sqrt{3\log(n)}$. On the left, we plot $10$ species randomly chosen over 15000 with starting abundances equals to $\frac{1}{2}$. On the right, the histogram of the abundances is represented, and the normal density with mean $1$ and variance $\frac{1}{\alpha_n^2}$ is fitted. Notice the substantial spread of the abundances despite the high value of $n$.}  
	\label{fig:convergence-equilibrium}
\end{figure}

\subsection*{Notations} If $\boldsymbol{v}$ is a vector then $\|\boldsymbol{v}\|$ stands for its Euclidian norm; if $A$ is a matrix then $\|A\|$ stands for its spectral norm and $\|A\|_F=\sqrt{\sum_{ij} \lvert A_{ij} \rvert ^2}$ for its Frobenius norm. Let $\varphi$ be a function from some space ${\mathcal X}$ (usually $\mathbb{R}$) to $\mathbb{R}$ then $\|\varphi\|_{\infty} =\sup_{x\in {\mathcal X}} \lvert\varphi(x)\rvert$. Convergence in probability is denoted by $\xrightarrow[]{\mathcal P}$.
When no confusion can occur, we shall drop $n$ and simply denote $A, \Delta, \alpha, d,\bs{x}, \text{etc.}$ instead of $A_n, \Delta_n, \alpha_n, d_n,\bs{x}_n, \text{etc}$.

\subsection*{Organization of the paper} 
In Section \ref{sec:spectral-norm}, the spectral norm of a sparse matrix and the general strategy of proof are described.
Proof of Theorem \ref{th:sparse-matrix} is provided in Section \ref{section:proof-main} for Model (A), and in Section \ref{section:proof-main-B} for Model (B). Theorem \ref{th:global stability} is proved in Section \ref{section:proof-stability}. In Section \ref{sec:conclusion}, we conclude and state an open question.

\subsection*{Acknowlegments} 
The authors thank Maxime Clénet, François Massol and Mylène Maïda for fruitful discussions and are grateful to Nick Cook for his insight on the singular values of a sparse random matrix (see Appendix \ref{app:simplicity}).

\section{Spectral norm of the interaction matrix and strategy of proof}\label{sec:spectral-norm}

\subsection{The spectral norm of $\Delta_n\circ A_n/\sqrt{d}$}

In the following proposition which proof is based on \cite{bib1}, we provide an estimate of $\| \Delta\circ A/\sqrt{d}\|$. The fact that $A$'s entries are ${\mathcal N}(0,1)$ and that $d_n\ge \log(n)$ is crucial.

\begin{proposition}\label{prop:spectral-norm} Assume that $A$ is a $n\times n$ matrix with i.i.d. ${\mathcal N}(0,1)$ entries, that $\Delta$ is a $n\times n$ adjacency matrix of a $d$-regular graph, that $d\ge \log(n)$. Then there exists a constant $\kappa>0$ independent from $n$ (one can take for instance $\kappa=22$) such that 
	$$
	\mathbb{P}\left( \left\| \frac{\Delta \circ A}{\sqrt{d}}\right\| \ge \kappa\right) \xrightarrow[n\to \infty]{} 0\, .
	$$
	In particular, let $\delta\in (0,1)$ be fixed and $\alpha=\alpha(n) \xrightarrow[n\to\infty]{}\infty$. Then
	$$
	\mathbb{P}\left( \left\| \frac{\Delta\circ A}{\alpha\sqrt{d}}\right\| \le 1-\delta\right) \xrightarrow[n\to\infty]{}1\, .
	$$
\end{proposition}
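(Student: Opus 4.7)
The plan is to reduce the proposition to a sharp non-asymptotic spectral norm bound for a Gaussian matrix with prescribed variance profile, as developed in \cite{bib1}, combined with Gaussian concentration. The essential observation is that $\Delta\circ A$ has independent centered Gaussian entries, with $(i,j)$-entry of variance $\Delta_{ij}\in\{0,1\}$, and the adjacency matrix of a $d$-regular graph has constant row and column sums equal to $d$.

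First, to deal with the non-Hermitian character of $\Delta\circ A$, I would Hermitize by introducing the $2n\times 2n$ symmetric Gaussian matrix
\[
H = \begin{pmatrix} 0 & \Delta\circ A \\ (\Delta\circ A)^{\tran} & 0 \end{pmatrix}\, ,
\]
whose spectral norm coincides with $\|\Delta\circ A\|$, and whose entries are independent centered Gaussians with variance profile inherited from $\Delta$. For such a matrix, the results of \cite{bib1} yield an inequality of the shape
\[
\mathbb{E}\|H\| \ \le\ C_1 \max_{i}\sqrt{\textstyle\sum_j \Delta_{ij}} + C_2 \sqrt{\log(2n)}\cdot \max_{ij}\sqrt{\Delta_{ij}}\, .
\]
Since $\Delta$ is $d$-regular, $\sum_j \Delta_{ij}=d$ for every $i$, and $\max_{ij}\Delta_{ij}\le 1$, so $\mathbb{E}\|\Delta\circ A\| \le C_1\sqrt{d}+C_2\sqrt{\log n}+O(1)$. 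The assumption $d\ge \log n$ then absorbs the logarithmic correction into $\sqrt{d}$, giving $\mathbb{E}\|\Delta\circ A\|\le \kappa_0 \sqrt{d}$ for an explicit constant $\kappa_0$.

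Next I would upgrade this bound in expectation to a high-probability bound via Gaussian concentration. The map $A\mapsto \|\Delta\circ A\|$ is $1$-Lipschitz with respect to the Frobenius norm (each entry of $\Delta\circ A$ is either zero or an entry of $A$), so the Tsirelson-Ibragimov-Sudakov inequality gives
\[
\mathbb{P}\bigl(\,\|\Delta\circ A\|\ge \mathbb{E}\|\Delta\circ A\|+t\,\bigr) \ \le\ 2e^{-t^2/2}\, .
\]
Taking $t=(\kappa-\kappa_0)\sqrt{d}$ with $\kappa>\kappa_0$ and using $d\ge \log n$ produces a bound that decays like $n^{-c}$, hence tends to $0$. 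Tracking the constants in the statement from \cite{bib1} then allows one to pick $\kappa=22$, as claimed.

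The second assertion is an immediate consequence: for any fixed $\delta\in(0,1)$ and $\alpha=\alpha(n)\to \infty$, eventually $\alpha(1-\delta)\ge \kappa$, so
\[
\mathbb{P}\!\left(\left\|\frac{\Delta\circ A}{\alpha\sqrt{d}}\right\| \ge 1-\delta\right) \ \le\ \mathbb{P}\!\left(\left\|\frac{\Delta\circ A}{\sqrt{d}}\right\| \ge \kappa\right) \ \xrightarrow[n\to\infty]{}\ 0\, .
\]
The main technical point is really the first step: carefully invoking the correct statement from \cite{bib1} (sharp non-asymptotic spectral norm bound with explicit constants for a Gaussian matrix with $0/1$ variance profile) and checking that the sparsity hypothesis $d\ge \log n$ exactly matches the regime where the Poisson-type tail contribution no longer dominates the Gaussian-type bulk $\sqrt{d}$. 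Everything after that is a clean application of Hermitization and Gaussian concentration.
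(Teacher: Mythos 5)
Your proposal is correct and follows essentially the same route as the paper: both rest on the non-asymptotic spectral norm bound of \cite{bib1} for Gaussian matrices with a variance profile, using $d$-regularity to get row sums equal to $d$ and the hypothesis $d\ge\log n$ to absorb the logarithmic correction into $\sqrt{d}$. The only cosmetic difference is that the paper invokes the tail-bound form of that result (Corollary 3.11 of \cite{bib1}) directly with $t=\sqrt{\log n}$, whereas you re-derive it from the expectation bound plus Tsirelson--Ibragimov--Sudakov concentration and the $1$-Lipschitz property of $A\mapsto\|\Delta\circ A\|$ in Frobenius norm, which amounts to unpacking the same argument.
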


\begin{proof}
	Applying \cite[Corollary 3.11]{bib1} to $\frac{\Delta \circ A}{\sqrt{d}}$ with $\epsilon=\frac{1}{2}$, we obtain 
	$$
	\mathbb{P} \left( \left\| \frac{\Delta \circ A}{\sqrt{d}}\right\| \ge 3 + \frac{15}{2 \sqrt{\log \frac{3}{2}}} \times \frac{\sqrt{\log n}}{\sqrt{d}} + \frac{t}{\sqrt{d}}\right) \le e^{-\frac{t^2}{2}} \, .
	$$
	Fix $t=\sqrt{\log n}$, then $e^{-\frac{t^2}{2}} = \frac{1}{\sqrt{n}} \xrightarrow[n\to\infty]{} 0$ and $\frac{t}{\sqrt{d}} \le 1$ by assumption. 	Furthermore, there exists a rank $n_1$ such that for all $n \ge n_1$ :
	$$
	3+\frac{15}{2\sqrt{\log (3/2)}} \times \frac{\sqrt{\log n}}{d} + \frac{t}{\sqrt{d}} \quad \le \quad 4 + \frac{15}{2\sqrt{\log \frac{3}{2}}}\quad  <\quad  \kappa:=22 \, .
	$$
	Thus, 
	$\mathbb{P}\left( \left\| \frac{\Delta \circ A}{\sqrt{d}}\right\| \ge \kappa \right) \to 0$. 
	Since $\alpha\to \infty$, the last part of the proposition immediatly follows.
\end{proof}

\subsection{Strategy of proof} \label{subsection:The resolvent} Based on the previous control of the spectral norm in probability, we reduce the problem of feasibility to the control of the extreme values of high order terms of the resolvent, considered as a Neumann sum, see Lemma \ref{lemma:main}. This preliminary step is similar to \cite[Section 2.1]{bib4}.

Going back to Eq.\eqref{eq:equilibrium-A}, we can write $\left( I - \frac{\Delta\circ A}{\alpha\sqrt{d}}\right)\bs{x}=\ones$. Introducing the resolvent $Q=\left( I - \frac{\Delta\circ A}{\alpha\sqrt{d}}\right)^{-1}$ which by Proposition \ref{prop:spectral-norm} exists with probability tending to one, we obtain the representation
$$
\bs{x}=(x_k)_{k}=\left( I - \frac{\Delta\circ A}{\alpha\sqrt{d}}\right)^{-1}\ones = Q \ones
$$
which holds with growing probability. Denote by $\ee_k$ the $n\times 1$ $k$-th canonical vector, then $x_k=\ee_k^{\tran} \bs{x} = \ee_k^{\tran}Q\ones$. Unfolding the resolvent as a Neumann sum, we obtain
\begin{equation} \label{eq:decomp}
x_k \quad=\quad \ee_k^{\tran} Q\ones \quad =\quad \sum_{\ell =0}^\infty \ee_k^{\tran} \left( \frac{\Delta\circ A}{\alpha \sqrt{d}}\right)^{\ell} \ones
\quad =\quad 1+\frac{Z_k}{\alpha} +\frac{R_k}{\alpha^2}
\end{equation}
where 
$$
Z_k=\ee_k^{\tran} \left( \frac{\Delta \circ A}{\sqrt{d}}\right) \ones\qquad \text{and}\qquad R_k=\ee_k^{\tran} \sum_{\ell=2}^\infty \frac 1{\alpha^{\ell-2}}\left( \frac{\Delta\circ A}{\sqrt{d}}\right)^\ell\ones\, .
$$
Notice that the $Z_k$'s are i.i.d. ${\mathcal N}(0,1)$ random variables and denote by $\check{M}=\min_{k\in [n]} Z_k$.

Eq. \eqref{eq:decomp} immediatly yields 
\begin{equation}\label{eq:system}
\left\{ 
\begin{array}{lcl}
\min_{k\in[n]} x_k& \ge & 1+\frac 1{\alpha} \check M +\frac 1{\alpha^2} \min_{k\in[n]} R_k\ ,\\
\\ \min_{k\in[n]} x_k& \le & 1+\frac 1{\alpha} \check M +\frac 1{\alpha^2} \max_{k\in[n]} R_k\, .
\end{array}\right.
\end{equation}
Let $\alpha^*_n= \sqrt{2\log n}$, $\beta^*_n = \alpha^*_n - \frac 1{2\alpha^*_n}  \log ( 4\pi \log n)$ and denote by $G(x)= e^{-e^{-x}}$ the cumulative distribution of a Gumbel distributed random variable. Then it is well-known, see for instance \cite[Theorem 1.5.3]{bib14}, that 
\begin{eqnarray}
\mathbb{P} \left( \alpha^*_n(\check M_n +\beta^*_n) \ge x\right) &\xrightarrow[n\to\infty]{}& G(x)\label{eq:MIN}\, .
\end{eqnarray}
By taking into account this convergence, we can rewrite \eqref{eq:system} as
\begin{align}
1+\frac{\alpha^*_n}{\alpha_n} \left( -1+o_P(1) +\frac {\min_{k\in[n]} R_k}{\alpha^*_n \alpha_n} \right) & \le\min_{k\in[n]} x_k \quad \\
& \le 1+\frac{\alpha^*_n}{\alpha_n} \left( -1+o_P(1) +\frac {\max_{k\in[n]} R_k}{\alpha^*_n \alpha_n} \right)\,  \nonumber.
\end{align}\label{eq:useful}
where we used $(\alpha^*_n)^{-1} (\check M + \beta^*_n)=o_P(1)$. Theorem \ref{th:sparse-matrix} will then follow from the following lemma.

\begin{lemma}\label{lemma:main} Under the assumptions of Theorem \ref{th:sparse-matrix}, the following convergence holds
	$$
	\frac{\max_{k\in [n]} R_k}{\alpha_n \sqrt{2\log n}}\  \xrightarrow[n\to\infty]{\mathcal P} \ 0\qquad \textrm{and}\qquad 
	\frac{\min_{k\in [n]} R_k}{\alpha_n \sqrt{2\log n}} \ \xrightarrow[n\to\infty]{\mathcal P} \ 0 \ .
	$$
\end{lemma}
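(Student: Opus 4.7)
The plan is to treat $R_k$ via its Neumann expansion, isolating the dominant quadratic contribution from a geometrically decaying tail that already carries an extra factor $\alpha_n^{-1}$. Setting $M_n = \Delta_n \circ A_n/\sqrt{d}$, I would write
$$
R_k = R_k^{(2)} + \alpha_n^{-1}\, T_k, \qquad R_k^{(2)} = \ee_k^{\tran} M_n^{2} \ones, \qquad T_k = \sum_{\ell \ge 3} \alpha_n^{-(\ell-3)}\, \ee_k^{\tran} M_n^{\ell} \ones,
$$
and prove separately $\max_k |R_k^{(2)}| = o_P(\alpha_n \sqrt{\log n})$ and $\max_k |T_k| = O_P(\alpha_n \sqrt{\log n})$, with the symmetric lower bounds; the extra $\alpha_n^{-1}$ in front of $T_k$ combined with $\alpha_n \to \infty$ then absorbs the remainder.

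For the dominant term, note the explicit quadratic-form representation $R_k^{(2)} = d^{-1} \sum_{i,j} \Delta_{ki} \Delta_{ij} A_{ki} A_{ij}$ in the i.i.d. $\mathcal{N}(0,1)$ entries of $A$. Thanks to the $d$-regularity of $\Delta$, the associated symmetric matrix has Frobenius norm of order one, uniformly in $k$. Applying the Hanson-Wright inequality (equivalently, Gaussian concentration after smooth truncation on the event $E_n = \{\|M_n\| \le \kappa\}$ of Proposition \ref{prop:spectral-norm}) yields sub-Gaussian deviations with variance proxy of order one, and a union bound produces $\max_k |R_k^{(2)} - \mathbb{E} R_k^{(2)}| = O_P(\sqrt{\log n})$. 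The mean $\mathbb{E} R_k^{(2)}$ is controlled by $\mathbb{E}[A_{ki} A_{ij}] = \delta_{k=i=j}$, so only diagonal loops of $\Delta$ survive and the mean is bounded by $1/d$ uniformly in $k$. Dividing by $\alpha_n \sqrt{\log n}$ and using $\alpha_n \to \infty$ yields the leading-term conclusion.

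For the tail $T_k$, the same smooth-truncation strategy applied term-by-term on $E_n$ provides, for each $\ell \ge 3$, a globally Lipschitz proxy of $\ee_k^{\tran} M_n^{\ell}\ones$ whose Lipschitz constant is controlled by $O(\ell \kappa^{\ell-1})$ via $\|M_n\| \le \kappa$ and the $d$-regularity of $\Delta$. Borell's inequality and a union bound over $k$ and $\ell$ then yield $|\ee_k^{\tran} M_n^{\ell} \ones| = O_P(\ell \kappa^{\ell-1} \sqrt{\log n})$ uniformly; combining with the geometric factor $\alpha_n^{-(\ell-3)}$ (and $\alpha_n \ge 2\kappa$ eventually) and summing in $\ell$ gives $\max_k |T_k| = O_P(\sqrt{\log n})$, which after multiplication by $\alpha_n^{-1}$ is negligible compared to $\alpha_n \sqrt{\log n}$.

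\textbf{Main obstacle.} The delicate step is keeping the Lipschitz constants (equivalently, the Frobenius norms of the quadratic forms, or the $\ell_\infty$-norm of $M_n^{\ell}\ones$) bounded independently of $n$ rather than blowing up like $\sqrt n$, as the naive spectral-norm estimate $|\ee_k^{\tran} M_n^{\ell}\ones| \le \|M_n\|^{\ell}\sqrt{n}$ would give. In the critical regime $\alpha_n \sim \sqrt{\log n}$ such a $\sqrt n$ factor would \emph{not} be absorbed by $\alpha_n \sqrt{\log n}$. This is exactly the normalization obstruction flagged in Remark 3 and is why Models (A) and (B) must be treated in separate sections: for Model (A), the block-permutation structure factors $M_n^{\ell}\ones$ into products of independent $d \times d$ Gaussian blocks acting on unit-normalized block-constant vectors, taming the Frobenius/Lipschitz constants by hand; for Model (B), the regime $d_n \propto n$ makes $\|\ones/\sqrt{d_n}\|_2^2 = n/d_n$ of order one and renders the straightforward Hanson-Wright / Gaussian-concentration bounds already tight.
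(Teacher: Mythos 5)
Your overall architecture is the same as the paper's (truncate on the event $\{\|\Delta\circ A/\sqrt d\|\le\kappa\}$, establish Lipschitz constants for the truncated summands that are summable after weighting by $\alpha_n^{-(\ell-2)}$, apply Gaussian concentration plus a union bound), and you correctly identify the central difficulty: keeping the Lipschitz/Frobenius constants $n$-free despite $\|\ones/\sqrt d\|=\sqrt{n/d}$, which is exactly what the block-permutation structure of Model (A), via the fact that $(\Delta\circ A)^{\ell}$ keeps only $d$ nonzero entries per row, is there to fix. The split into the $\ell=2$ term (Hanson--Wright) and the tail $\ell\ge 3$ (Borell) is an unnecessary detour --- the paper handles all $\ell\ge2$ uniformly by the Lipschitz route --- but it is not wrong, provided you check that the operator norm of your quadratic form is $O(1/\sqrt d)$ so that the sub-Gaussian regime of Hanson--Wright covers deviations of size $\sqrt{\log n}$; the naive bound $\|B\|\le\|B\|_F=O(1)$ only gives $\max_k|R_k^{(2)}-\mathbb{E}R_k^{(2)}|=O_P(\log n)$ after the union bound, which is \emph{not} $o(\alpha_n\sqrt{\log n})$ when $\alpha_n$ grows slowly.

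The genuine gap is the expectations for $\ell\ge3$. Borell's inequality controls $\tilde\rho_{k,\ell}-\mathbb{E}\tilde\rho_{k,\ell}$, not $\tilde\rho_{k,\ell}$ itself, and your proposal silently upgrades the former to the latter. The means cannot be dismissed by a deterministic bound: even under Model (A), the best pointwise estimate is $|\varphi_d(A)\,\ee_k^{\tran}(\Delta\circ A/\sqrt d)^{\ell}\ones|\le(1+\kappa)^{\ell}\sqrt d$, so $\sum_{\ell}\alpha^{-(\ell-2)}|\mathbb{E}\tilde\rho_{k,\ell}|=O(\sqrt d)$, which is far larger than $o(\alpha_n\sqrt{\log n})$ in the sparse regime; and the truncation factor $\varphi_d(A)$ is correlated with the polynomial part, so you cannot just compute Gaussian moments either. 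This is where the paper spends a full step for each model: for Model (A) it exploits block exchangeability, replacing $\ee_{\mu_i}$ by the block average $\ones^{(\mu)}/d$ whose normalized Euclidean norm is $1/\sqrt d$, gaining exactly the missing factor and yielding $\sup_k|\mathbb{E}\tilde R_k|\le C$; for Model (B), where no exchangeability is available, it uses Gaussian integration by parts applied to $\varphi_d(A)$ together with the almost-sure simplicity of the singular values of $\Delta\circ A$ (needed to differentiate $\|\Delta\circ A\|$), yielding $\mathbb{E}\tilde R_k=\mathcal O(\alpha/\sqrt d)$. Without an argument of this kind your proof does not close, and supplying one is not a routine addition --- it is the place where the two models actually require different techniques.
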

Proof of Lemma \ref{lemma:main} relies on a careful analysis of the order of magnitude of the extreme values of the remaining term $(R_k)_{k\in[n]}$. 
The sparse structure of matrix $\Delta\circ A$ (either Model (A) or (B)) requires a specific analysis, substantially different from the one in \cite{bib4}.

\section{Proof of Theorem \ref{th:sparse-matrix} for Model (A)} \label{section:proof-main}

We assume that $\Delta_n$ follows Model (A).

In order to prove Lemma \ref{lemma:main}, we first take advantage of the fact that $\| \Delta\circ A/\sqrt{d}\|$ is typically lower than $\kappa$ (see Proposition \ref{prop:spectral-norm}) and replace $R_k$ by a truncated version $\widetilde R_k$ (step 1). We then prove that $A\mapsto \widetilde R_k(A)$ is Lipschitz (step 2). 
The quantity $\widetilde R_k$ being Lipschitz, its centered version is sub-Gaussian if the matrix entries are Gaussian i.i.d. We finally prove that $\widetilde R_k(A)$ is uniformily integrable (step 3). The conclusion easily follows. Although the general strategy is similar to the one developed in \cite{bib4}, the proofs are substantially different. In particular, proofs of step 2 and 3 heavily rely on the block permutation structure of the matrices. 

\subsection{Step 1: Truncation}

Toward proving Lemma \ref{lemma:main}, sub-Gaussiannity is an important property, which follows from Lipschitz properties by standard concentration of measure arguments. Unfortunately $A\mapsto R_k(A)$ fails to be Lipschitz (simply notice that $R_k(A)$ has quadratic and higher order terms). In order to circumvent this issue, we provide a truncated version of $R_k$. 

Let $\kappa>0$ as in Prop. \ref{prop:spectral-norm} (one can take $\kappa=22$), $\eta\in (0,1)$ and $\varphi:\mathbb{R}^+\to [0,1]$ a smooth function:
\begin{equation}\label{eq:def-varphi}
\varphi (x) = \begin{cases}
1 & \text{ if } x \in [0,\kappa+1-\eta] \, ,\\
0 & \text{ if } x \ge \kappa+1 \, 
\end{cases}
\end{equation}
strictly decreasing from $1$ to 0 for $x\in(\kappa+1-\eta,\kappa+1)$. According to Prop. \ref{prop:spectral-norm}, 
$$
\varphi_d(A):= \varphi\left( \left\| \frac {\Delta\circ A}{\sqrt{d}}\right\| \right) 
$$
is equal to one with high probability. We introduce the truncated value:
$$
\tilde{R}_k(A) = \varphi_{d}(A) R_k(A)\ .
$$
We have
\begin{align*}
\mathbb{P} \left( \max_k R_k(A) \neq \max_k \tilde{R}_k(A) \right) & \le
\mathbb{P} \left(\exists k\in [n],\ R_k(A)\neq \tilde{R}_k(A) \right) \\
& \le \mathbb P (\varphi_d(A) < 1) \\
& \le  \mathbb{P}\left( \left\| \frac{\Delta \circ A}{\sqrt{d}}\right\| \ge \kappa\right)\ \xrightarrow[n\to \infty]{}\ 0\, , 
\end{align*}
from which we deduce
\begin{equation} \label{eq:tildeR-property-1}
\frac{\max_{k \in [n]} R_k - \max_{k \in [n]} \tilde{R}_k} {\alpha_n \sqrt{2 \log n}} \xrightarrow[n \to \infty]{\mathcal{P}} 0 \, .
\end{equation}
It is therefore sufficient to prove 
\begin{equation} \label{eq:tildeR-property-2}
\frac{\max_{k \in [n]} \tilde{R}_k} {\alpha_n \sqrt{2 \log n}} \xrightarrow[n \to \infty]{\mathcal{P}} 0 
\end{equation}
to establish the first part of Lemma \ref{lemma:main}. The property of the minimum can be proved similarly.

\subsection{Step 2: Lipschitz property for $\tilde{R}_k(A)$}

For $\ell\ge 2$, we introduce the following summand terms:
\begin{equation}\label{eq:def-summand}
\rho_{k,\ell}(A)  = \ee_k^{\tran} \frac 1{\alpha^{\ell-2}}\left( \frac{\Delta\circ A}{\sqrt{d}}\right)^\ell\ones\qquad \textrm{and}\qquad \tilde{\rho}_{k,\ell}(A)=\varphi_d(A) \rho_{k,\ell}(A)\ ,
\end{equation}
so that $R_k(A) = \sum_{\ell=2}^\infty \rho_{k,\ell}(A)$ and $\tilde R_k(A) = \sum_{\ell=2}^\infty \tilde \rho_{k,\ell}(A)$.

The following lemma is the main result of this section. 

\begin{lemma} \label{lemma:lipsch}
	Let $\kappa>0$ as in Proposition \ref{prop:spectral-norm}, $\delta\in (0,1)$ and $n_0$ such that for all $n\ge n_0$, 
	$$
	\frac{\kappa+1}{\alpha_n}\le 1-\delta\ .
	$$
	For $\ell \ge 2$ and $n\ge n_0$, the function $ \tilde{\rho}_{k,\ell} : \mathcal{M}_n(\mathbb{R}) \rightarrow \mathbb R$ is $K_\ell$-Lipschitz, i.e. 
	\begin{equation}\label{lipschitzrho}
	\left\lvert\tilde{\rho}_{k,\ell} (A) - \tilde{\rho}_{k,\ell} (B)\right\rvert \le K_\ell \left\|A-B\right\|_F\, , \end{equation} 
	where $K_\ell=K_\ell(\kappa,n_0,\delta)>0$ is a constant independent from $k$, $d$ and $n\ge n_0$. Moreover,
	$K:=\sum_{\ell\ge 2} K_{\ell} \ <\ \infty$. 
	In particular, the function $ \tilde{R}_k $ is $K$-Lipschitz : 
	\begin{equation}\label{lipschitz}
	\left\lvert \tilde{R}_k (A) - \tilde{R}_k (B)\right\rvert \le K \left\|A-B\right\|_F\, . \end{equation}
\end{lemma}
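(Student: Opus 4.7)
The plan is to bound the Frobenius-norm gradient $\nabla_{\!A}\tilde{\rho}_{k,\ell}$ uniformly on $\mathcal M_n(\mathbb R)$; the desired Lipschitz constant of $\tilde{\rho}_{k,\ell}$ then emerges as the supremum of $\|\nabla_{\!A}\tilde{\rho}_{k,\ell}\|_F$, and summing over $\ell \ge 2$ yields \eqref{lipschitz}. By the product rule, at every $A$ at which the spectral norm of $\Delta\circ A/\sqrt d$ is differentiable (a full-measure set),
\[
\nabla_{\!A}\tilde{\rho}_{k,\ell}(A) \;=\; \rho_{k,\ell}(A)\,\nabla_{\!A}\varphi_d(A) \;+\; \varphi_d(A)\,\nabla_{\!A}\rho_{k,\ell}(A),
\]
and both terms vanish outside $\{\|\Delta\circ A/\sqrt d\|\le \kappa+1\}$, so I only need to control the two factors on this region and finish by a standard integration-along-segments argument.

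For the pointwise factor, the block-permutation structure $\Delta = P_\sigma\otimes J_d$ localizes $\rho_{k,\ell}$: writing $\tilde M = \Delta\circ A/\sqrt d$ and letting $i_k$ denote the block index of $k$, the $k$-th entry of $\tilde M^\ell \ones$ depends only on the ordered product of $d\times d$ rescaled Gaussian blocks $A^{(i_k)}/\sqrt d, A^{(\sigma(i_k))}/\sqrt d, \dots, A^{(\sigma^{\ell-1}(i_k))}/\sqrt d$ applied to $\ones_d$. Since each block operator norm is $\le \|\tilde M\| \le \kappa+1$ on the support, this gives $|\rho_{k,\ell}(A)| \le \alpha^{-(\ell-2)}(\kappa+1)^\ell \sqrt d$. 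For $\nabla_{\!A}\varphi_d$, the chain rule gives $\nabla_{\!A}\varphi_d = (\varphi'(\|\tilde M\|)/\sqrt d)\, \Delta\circ (\bs u \bs v^\tran)$ with $\bs u, \bs v$ unit top singular vectors of $\tilde M$. The block-permutation structure of $\Delta$ yields $\|\Delta\circ \bs u \bs v^\tran\|_F^2 = \sum_I \|\bs u^{(I)}\|^2 \|\bs v^{(\sigma(I))}\|^2 \le 1$, whence $\|\nabla_{\!A}\varphi_d\|_F \le \|\varphi'\|_\infty/\sqrt d$. The two $\sqrt d$ factors cancel exactly, giving $\|\rho_{k,\ell}\,\nabla_{\!A}\varphi_d\|_F \le \|\varphi'\|_\infty \alpha^{-(\ell-2)}(\kappa+1)^\ell$.

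For the gradient factor, differentiation of matrix powers yields
\[
\nabla_{\!A}\rho_{k,\ell}(A) \;=\; \frac{\alpha^{-(\ell-2)}}{\sqrt d}\, \Delta\circ \sum_{s=0}^{\ell-1}\bs u_s \bs w_{\ell-1-s}^\tran,\qquad \bs u_s = (\tilde M^{\tran})^s\ee_k,\quad \bs w_t = \tilde M^t \ones.
\]
The crucial block-structure observation is twofold: (i) $\bs u_s$ is supported on the single block $\sigma^{-s}(i_k)$, so $\|\bs u_s\| \le \|\tilde M\|^s \le (\kappa+1)^s$; (ii) the Hadamard product with $\Delta$ restricts the contribution of $\bs w_{\ell-1-s}$ to its single block $\sigma^{1-s}(i_k)$, whose Euclidean norm is at most $(\kappa+1)^{\ell-1-s}\sqrt d$ by the same chain-product estimate. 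Therefore $\|\Delta\circ \bs u_s \bs w_{\ell-1-s}^\tran\|_F \le (\kappa+1)^{\ell-1}\sqrt d$, and summing over $s$ gives $\|\nabla_{\!A}\rho_{k,\ell}\|_F \le \ell\,\alpha^{-(\ell-2)}(\kappa+1)^{\ell-1}$.

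Combining the two estimates, $\|\nabla_{\!A}\tilde\rho_{k,\ell}\|_F \le \alpha^{-(\ell-2)}(\kappa+1)^{\ell-1}\bigl[(\kappa+1)\|\varphi'\|_\infty + \ell\bigr]$. Invoking the hypothesis $(\kappa+1)/\alpha_n \le 1-\delta$ for $n\ge n_0$, this prefactor is dominated by $(\kappa+1)(1-\delta)^{\ell-2}\bigl[(\kappa+1)\|\varphi'\|_\infty + \ell\bigr] =: K_\ell$, which is independent of $k$, $d$, and $n\ge n_0$, and $\sum_{\ell\ge 2}K_\ell < \infty$ by geometric decay. The main obstacle is precisely the $\sqrt d$ growth that naively appears both in $|\rho_{k,\ell}|$ and in each summand of $\nabla_{\!A}\rho_{k,\ell}$; it is the block-permutation structure of Model (A)---forcing $\bs u_s$ into a single block and collapsing the Hadamard with $\Delta$ onto a single block of $\bs w_{\ell-1-s}$---that produces the compensating $1/\sqrt d$ from $\nabla_{\!A}\varphi_d$ and from the normalization $d^{-\ell/2}$ in $\rho_{k,\ell}$, yielding a dimension-free constant $K_\ell$.
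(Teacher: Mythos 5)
Your proposal is correct and follows essentially the same route as the paper: a product-rule gradient bound in which the block-permutation structure localizes $\ones$ to a single $d$-block (so the naive $\sqrt{n/d}$ loss is replaced by a dimension-free constant), followed by integration along segments. The only point the paper treats with noticeably more care is the justification of that last step --- it constructs explicit simple-spectrum perturbations $\Delta\circ A^\varepsilon$ preserving the block pattern and uses the finiteness of the set of degenerate interpolation times --- whereas you appeal to a.e.\ differentiability; this is a matter of rigor rather than of method.
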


Given a $n\times n$ matrix $C$, we define its hermitization matrix $\mathcal{H}(C)$ by: 
$$
\mathcal{H}(C)=\begin{pmatrix}
0 & C \\
C^{\tran} & 0 \\
\end{pmatrix}\, .
$$
A well-known property of ${\mathcal H}(C)$ is its symmetric spectrum and the fact that the singular values of $C$ are the non-negatives eigenvalues of $\mathcal{H}(C)$. In particular, $\left\|C\right\|$ corresponds to the largest eigenvalue of $\mathcal{H}(C)$.

In order to prove Lemma \ref{lemma:lipsch}, we first consider the case where ${\mathcal H}(\Delta\circ A)$ has a simple spectrum, a sufficient condition for the differentiability of $\|\Delta\circ A\|$, we then prove that the Euclidian norm of the gradient of $\tilde{\rho}_{k,\ell}(A)$ is bounded : $ \left\| \nabla \tilde{\rho}_{k,\ell}(A) \right\| \le K_\ell $ and finally proceed by approximation to get the general Lipschitz property.

\begin{proof} We first consider the case where $\mathcal{H}(\Delta\circ A)$ has a simple spectrum. In this case, $\|\Delta\circ A\|$ is equal to the largest eigenvalue of ${\mathcal H}(\Delta\circ A)$ which has multiplicity 1 and is thus differentiable. 
	Denote by $\partial_{ij} = \frac{\partial}{\partial A_{ij}}$. Notice that if $\Delta_{ij}=0$, then for any smooth function $f:\mathbb{R}^{n\times n} \to \mathbb{R}$, $\partial_{ij} f(\Delta\circ A)=0$. If needed, we will take advantage of this property. 
	
	We have :  
	$$
	\left\| \nabla \tilde{\rho}_{k,\ell} (A) \right\| = \sqrt{\sum_{i,j=1}^{n} \left\lvert\partial_{ij}\tilde{\rho}_{k,\ell}(A) \right\rvert^2} \, ,
	$$
	and 
	\begin{eqnarray*}
		\partial_{ij} \tilde{\rho}_{k,\ell} (A) & = & \partial_{ij} \left( \varphi_d(A){\rho}_{k,\ell} (A)\right)\\
		&=&\left(\partial_{ij} \varphi_{d}(A) \right) \rho_{k,\ell}(A) + \varphi_{d} (A) \partial_{ij} \rho_{k,\ell}(A) \quad=: \quad S_{1,ij} + S_{2,ij} \, .
	\end{eqnarray*}
	In particular,
	\begin{equation*}
	\sum_{i,j=1}^{n} \left\lvert \partial_{ij}\tilde{\rho}_{k,\ell}(A) \right\rvert^2 \le 2 \sum_{i,j=1}^{n} \left\lvert S_{1,ij}\right\rvert^2 + 2 \sum_{i,j=1}^{n} \left\lvert S_{2,ij}\right\rvert^2 \, .
	\end{equation*}
	We first evaluate $\sum_{ij} \lvert S_{1,ij}\rvert^2$. 
	
	Recall that $\| \Delta\circ A\|$ being the maximum eigenvalue of ${\mathcal H}(\Delta\circ A)$ which by assumption is simple, it is differentiable by \cite[Theorem 6.3.12]{bib12}. Let $\uu$ and $\vv$ be respectively the left and right normalized singular vectors associated to the largest singular value $\| \Delta\circ A\|$ of $\Delta\circ A$. Then
	$$
	\mathcal{H}(\Delta\circ A)\ww=\left\|\Delta \circ A\right\|\ww \, , \quad  \text{where} \quad  \ww = \begin{pmatrix} 
	\uu \\
	\vv \\
	\end{pmatrix}\, .
	$$
	Notice that $\left\|\ww\right\|^2=2$. We have
	$$
	\partial_{ij} \varphi_d(A)= \frac 1{\sqrt{d}} \varphi'\left( \frac{\|\Delta\circ A\|}{\sqrt{d}}\right)\partial _{ij} \| \Delta\circ A\|
	$$
	and
	\begin{equation}\label{eq:partial-derivative}
	\partial_{ij} \|\Delta\circ A\| = 
	\begin{cases}
	\frac 1{\|w\|}(\uu^{\tran}\ee_i \ee_j^{\tran} \vv + \vv^{\tran}\ee_j \ee_i^{\tran} \uu) = \uu^{\tran}\ee_i \ee_j^{\tran} \vv&\textrm{if}\ \Delta_{ij}\neq 0\ ,\\
	0&\textrm{else.}
	\end{cases}
	\end{equation}
	Let $i\in [n]$. Denote by 
	\begin{equation}\label{def:I}
	{\mathcal I}_i=\{ j\in [n],\ \Delta_{ij}=1\}\, ;
	\end{equation}
	notice that $\textrm{card}({\mathcal I}_i)=d$. We have
	\begin{eqnarray*}
		\sum_{i,j\in [n]} \left\lvert S_{1,ij}\right\rvert^2 & = &\sum_{i\in [n]} \sum_{j\in {\mathcal I}_i} \left\lvert\uu^{\tran}\ee_i \ee^{\tran}_{j}\vv \varphi ' \left( \left\| \frac{\Delta\circ A}{\sqrt d}\right\|\right) \frac{\mathbf{1}}{\sqrt{d}}\rho_{k,\ell}(A) \right\rvert^2  \, ,\\
		& \le& \left\lvert\varphi ' \left( \left\| \frac{\Delta\circ A}{\sqrt d}\right\|\right) \frac{\mathbf{1}}{\sqrt{d}} \rho_{k,\ell}(A) \right\rvert^2 \sum_{i\in [n]} \left\lvert\uu^{\tran}\ee_i\right\rvert^2 \sum_{j\in[n]} \left\lvert\ee^{\tran}_j \vv\right\rvert^2 \, , \\
		& = & \left\lvert\varphi ' \left( \left\| \frac{\Delta\circ A}{\sqrt d}\right\|\right) \frac{\mathbf{1}}{\sqrt{d}} \rho_{k,\ell}(A) \right\rvert^2 \, .
	\end{eqnarray*}
	We now focus on 
	$$
	\left\lvert\frac{\mathbf{1}}{\sqrt{d}} \rho_{k,\ell}(A)\right\rvert^2  = \left\lvert\ee_k^{\tran}  \frac{\mathbf{1}}{\alpha^{\ell-2}}  \left(\frac{\Delta \circ A}{\sqrt{d}}\right)^{\ell}   \frac{\ones}{\sqrt{d}}\right\rvert^2 \, .
	$$
	Notice that $\|\mathbf{1}/\sqrt{d}\|=\sqrt{n/d}$. Since matrix $\Delta\circ A$ follows Model (A), one can notice that $(\Delta\circ A)^{\ell}$ remains a block matrix with only $d$ nonzero terms per row (and per column as well). This property is fundamental for the remaining estimates and fully relies on the Model (A) assumption.
	
	Denote by 
	\begin{equation}\label{def:J}
	{\mathcal J}_{k,\ell}=\left\{ p\in [n],\ \left[ (\Delta\circ A)^{\ell}\right]_{kp} \neq 0\right\}
	\end{equation}
	and by $\mathbf{1}^{{\mathcal J}_{k,\ell}}$ the $n\times 1$ vector with zero coordinates except those belonging to ${\mathcal J}_{k,\ell}$, set to 1. In particular, $\| \mathbf{1}^{{\mathcal J}_{k,\ell}}\| =\sqrt{d}$. Then 
	$$
	\ee_k^{\tran} (\Delta\circ A)^{\ell} \mathbf{1} = \ee_k^{\tran} (\Delta \circ A)^{\ell} \mathbf{1}^{{\mathcal J}_{k,\ell}}\ .
	$$
	We have 
	\begin{eqnarray*}
		\left\lvert\frac{1}{\sqrt{d}} \rho_{k,\ell}(A)\right\rvert^2 & = & \left\lvert\ee_k^{\tran}  \left(\frac{\Delta \circ A}{\alpha \sqrt d}\right)^{\ell-2} \left(\frac{\Delta \circ A}{\sqrt{d}}\right)^2 \frac{\ones}{\sqrt{d}}\right\rvert^2 \, ,\\
		& = & \left\lvert\ee_k^{\tran}  \left(\frac{\Delta \circ A}{\alpha \sqrt d}\right)^{\ell-2} \left(\frac{\Delta \circ A}{\sqrt{d}}\right)^2  \frac{\ones^{{\mathcal J}_{k,\ell}}}{\sqrt{d}}\right\rvert^2 \, ,\\
		& \le &\left\|\ee_k^{\tran}\right\|^2 \left\|\left(\frac{\Delta \circ A}{\alpha \sqrt d}\right)^{\ell-2} \right\|^2 \left\|\frac{\Delta \circ A}{\sqrt{d}}\right\|^4  \left\|\frac{\ones^{{\mathcal J}_{k,\ell}}}{\sqrt{d}}\right\|^2 \, , \\
		& \le & \left\|\frac{\Delta \circ A}{\alpha \sqrt d}\right\|^{2(\ell-2)} \left\|\frac{\Delta \circ A}{\sqrt{d}}\right\|^4\, .
	\end{eqnarray*}

	Using the fact that $\varphi ' \left( \left\| \frac{\Delta\circ A}{ \sqrt d}\right\|\right)=0$ if $\left\| \frac{\Delta \circ A}{\sqrt{d}} \right\| \ge \kappa+1$, we have 
	\begin{align*}
	\left\lvert\varphi ' \left( \left\| \frac{\Delta\circ A}{\sqrt d}\right\|\right) \frac{1}{\sqrt{d}} \rho_{k,\ell} (A) \right\rvert^2 & \le \quad \left\lvert\varphi ' \left( \left\| \frac{\Delta\circ A}{\sqrt d}\right\|\right)\right\rvert^2 \left\|\frac{\Delta \circ A}{\alpha \sqrt d}\right\|^{2(\ell-2)} \left\|\frac{\Delta \circ A}{\sqrt{d}}\right\|^4 \, ,\\
	& \le \quad \left\|\varphi '\right\|_\infty^2 \left(1-\delta\right)^{2(\ell-2)} \, (1+\kappa)^4\ ,
	\end{align*}
	and finally
	\begin{equation}\label{eq:S1}
	\sum_{i,j=1}^{n} \left\lvert S_{1,ij}\right\rvert^2 \le \left\|\varphi '\right\|_\infty^2 \left(1-\delta\right)^{2(\ell-2)} \times (1+\kappa)^4 \,.
	\end{equation} 
	We now evaluate $\sum_{i,j=1}^{n} \left\lvert S_{2,ij}\right\rvert^2 = \sum_{i,j\in [n]} \left\lvert \varphi_d(A) \partial_{ij} \rho_{k,\ell} (A)\right\rvert^2$.
	
	Recall the definitions of ${\mathcal I}_i$ and ${\mathcal J}_{k\ell}$ introduced in \eqref{def:I}, \eqref{def:J}. We have
	$$
	\partial_{ij} \rho_{k,\ell}(A) = 
	\frac{1}{\alpha^{\ell-2}(\sqrt{d})^\ell} \sum_{p=0}^{\ell-1} \ee_k^{\tran} (\Delta \circ A)^p \ee_i \ee_j^{\tran} (\Delta \circ A)^{\ell-1-p} \ones 
	\quad \textrm{if}\quad j \in \mathcal{I}_i
	$$
	and zero else. Then

	\begin{eqnarray}
	\sum_{i\in [n]}  \sum_{j\in \mathcal{I}_i} \left\lvert \partial_{ij} \rho_{k,\ell}(A)\right\rvert^2 & \le & \frac{\ell}{\alpha^{2(\ell-2)}d^\ell}  \left(\sum_{i\in[n]} \sum_{j\in \mathcal{I}_i} \left\lvert  \ee_k^{\tran} (\Delta \circ A)^{\ell-1} \ee_i \ee_j^{\tran} \ones \right\rvert^2\right. \nonumber \\
	& & \quad \left. + \sum_{p=0}^{\ell-2} \sum_{i=1}^{n} \sum_{j\in \mathcal{I}_i} \left\lvert  \ee_k^{\tran} (\Delta \circ A)^p \ee_i \ee_j^{\tran} (\Delta \circ A)^{\ell-1-p} \ones \right\rvert^2  \right)\, , \nonumber \\
	& = & \frac{\ell}{\alpha^{2(\ell-2)}d^\ell}  \left(d \sum_{i\in [n]} \left\lvert  [(\Delta \circ A)^{\ell-1}]_{k,i}  \right\rvert^2 \right. \nonumber\\
	& & \quad \left. + ~ d \sum_{p=0}^{\ell-2} \sum_{i\in [n]} \sum_{j\in \mathcal{I}_i} \left\lvert  [(\Delta \circ A)^p]_{k,i}\, \ee_j^{\tran} (\Delta \circ A)^{\ell-1-p}	\frac{\ones}{\sqrt{d}} 	\right\rvert^2 \right) \, , \nonumber \\
	& \le & \frac{\ell}{\alpha^{2(\ell-2)}d^{\ell-1}} \left(\left[ (\Delta \circ A)^{\ell-1}\left((\Delta \circ A)^{\ell-1}\right)^{\tran}\right]_{k,k} \right. \label{estimate:S2} \\
	& & \quad \left. +  \sum_{p=0}^{\ell-2} \sum_{i\in [n]} \left\lvert  [(\Delta \circ A)^p]_{k,i} \right\rvert^2 \sum_{j\in \mathcal{I}_i} \left\lvert  \ee_j^{\tran} (\Delta \circ A)^{\ell-1-p} \frac{\ones}{\sqrt{d}} \right\rvert^2	\right) \, \nonumber . 
	\end{eqnarray}

	We concentrate on the term $T=\sum_{j\in \mathcal{I}_i} \left\lvert  \ee_j^{\tran} (\Delta \circ A)^{\ell-1-p} 
	\frac{\ones}{\sqrt{d}} 
	\right\rvert^2$ and prove that
	\begin{equation}\label{eq:tricky-estimate}
	T \quad \le\quad  \| \Delta \circ A\|^{2(\ell-1-p)}\ .
	\end{equation}
	Let $I_{{\mathcal I}_i}=\textrm{diag}(\mathbf{1}^{\mathcal{I}_i}(k);\, k\in [n]\})$, where $\mathbf{1}^{\mathcal{I}_i}$ is the $n\times 1$ vector with component 1 if it belongs to $\mathcal{I}_i$ and zero else, then
	$$
	T=\frac{\ones^{\tran}}{\sqrt{d}} \left[(\Delta \circ A)^{\ell-1-p} \right]^{\tran}  I_{\mathcal{I}_i} (\Delta \circ A)^{\ell-1-p} 
	\frac{\ones}{\sqrt{d}} \ .
	$$
	Notice that $(\Delta \circ A)^{\ell-1-p}$ has the form $(P_\tau\otimes \ones_d\ones_d^{\tran})\circ B$ for some $\tau\in {\mathcal S}_m$ and 
	some $n\times n $ matrix $B$. In particular, taking into account the matching between the indices of $I_{\mathcal{I}_i}$ and $(\Delta \circ A)^{\ell-1-p}$'s blocs, there exists a $d\times d$ bloc of matrix $(\Delta\circ A)^{\ell-p-1}$ say $B_i$ such that matrix 
	$$
	\left[(\Delta \circ A)^{\ell-1-p} \right]^{\tran}  I_{\mathcal{I}_i} (\Delta \circ A)^{\ell-1-p} 
	$$
	is zero except a $d\times d$ bloc $B_i^{\tran} B_i$ on the diagonal and
	$$
	T=\frac{\ones_d^{\tran}}{\sqrt{d}} B_i^{\tran} B_i
	\frac{\ones_d}{\sqrt{d}}\  \le\  \| B_i^\tran B_i\|\ \le\ \|B_i\|^2\ \le\ \left\| (\Delta\circ A)^{\ell-p-1}\right\|^2 
	\	\le\ \left\| \Delta\circ A\right\|^{2(\ell -p -1)} \, .
	$$
	Eq.\eqref{eq:tricky-estimate} is established. Notice in particular that the estimate does not depend on the index $i$.
	Plugging this estimate into \eqref{estimate:S2} yields 
	\begin{eqnarray*}
		\lefteqn{\sum_{i\in [n]} \sum_{j\in \mathcal{I}_i} \left\lvert\partial_{ij} \rho_{k,\ell}(A)\right\rvert^2}\\ 
		& \quad  \le& \frac{\ell}{\alpha^{2(\ell-2)}d^{\ell-1}} \left(\left\|(\Delta \circ A)^{\ell-1}\right\|^2 \right. \\
		& & \quad \left. + \sum_{p=0}^{\ell-2} \left[\left((\Delta \circ A)^p\right)^*(\Delta\circ A)^p\right]_{kk} \left\|\Delta \circ A\right\|^{2(\ell-p-1)} \right) \, ,\\
		& \quad \le &\frac{\ell}{\alpha^{2(\ell-2)}d^{\ell-1}} \left(\left\|\Delta \circ A\right\|^{2(\ell -1)} + \sum_{p=0}^{\ell-2} \left\|\Delta \circ A\right\|^{2p} \left\|\Delta \circ A\right\|^{2(\ell-p-1)}  \right) \, , \\
		& \quad = &\frac{\ell^2}{\alpha^{2(\ell-2)}d^{\ell-1}} \, \left\|\Delta \circ A\right\|^{2(\ell-1)} \quad= \quad\ell^2 \left\|\frac{\Delta \circ A}{\alpha \sqrt{d}}\right\|^{2(\ell-2)} \left\|\frac{\Delta \circ A}{\sqrt{d}}\right\|^2\, .\\
	\end{eqnarray*}
	Multiplying by $\lvert\varphi_d(A)\rvert^2$ finally yields the appropriate estimates:
	\begin{eqnarray}\label{eq:S2}
	\sum_{i,j\in [n]} \left\lvert S_{2,ij}\right\rvert^2 & \le & \ell^2 \left\lvert\varphi_{d,\sigma}(A)\right\rvert^2 \left\|\frac{\Delta \circ A}{\alpha \sqrt{d}}\right\|^{2(\ell-2)}\left\|\frac{\Delta \circ A}{\sqrt{d}}\right\|^2 \, ,\nonumber\\
	&\le & \ell^2 (1-\delta)^{2(\ell -2)}(1+\kappa)^2\, .
	\end{eqnarray}
	Combining \eqref{eq:S1} and \eqref{eq:S2}, we obtain :
	\begin{eqnarray} \label{eq:differentiability-estimate}
	\left\| \nabla \tilde{\rho}_{k,\ell} (A) \right\|  & \le &  \sqrt{2 \sum_{i,j=1}^{n} \left\lvert S_{1,ij}\right\rvert^2 + 2 \sum_{i,j=1}^{n} \left\lvert S_{2,ij}\right\rvert^2} \, , \nonumber \\
	& \le &	2(1-\delta)^{\ell-2} (\kappa+1)^2 (\left\|\varphi' \right\|_\infty + \ell)  \ =: K_{\ell}\, .
	\end{eqnarray}
	where $K_\ell$ does not depend upon $k,n,d$ and is summable.
	
	So far, we have established a local estimate over $\left\|\nabla \tilde{\rho}_{k,\ell}(A)\right\|$ for any matrix $A$ such that $\mathcal{H}(\Delta\circ A)$ has a simple spectrum. We first establish the Lipschitz estimate \eqref{lipschitzrho} for two such matrices $A$ and $B$. 
	
	Let $A$, $B$ such that $\mathcal{H}(\Delta\circ A)$ and $\mathcal{H}(\Delta\circ B)$ have simple spectrum and consider the interpolation matrix
	$$A_t = (1-t)A + tB$$ for $t \in [0;1]\ .$
	The continuity of the eigenvalues implies that there exists $\epsilon>0$ sufficiently small such that $\mathcal{H}\left(\Delta\circ {A_t}\right)$ has a simple spectrum for $t \le \epsilon$ and $t\ge 1-\epsilon$. By an argument in \cite[Chapter 2.1]{bib13}, the number of eigenvalues of $\mathcal{H}\left(\Delta\circ {A_t}\right)$ remains constant for $t \in [0,1]$, except maybe for a finite number of points $(t_l; 1\le l\le L)$ : $t_0 = 0 < t_1 < \dots < t_L < t_{L+1} = 1$. Since $\mathcal{H}\left(\Delta\circ {A_t}\right)$ has simple spectrum for $t \in [0;\epsilon) \cup (1-\epsilon;1]$, it has simple spectrum for all $t \notin \{t_l,~ l\in [L]\}$. We can now proceed:
	\begin{eqnarray*}
		\left\lvert\tilde{\rho}_{k,\ell}\left(A_{t_1}\right)-\tilde{\rho}_{k,\ell} \left(A\right)\right\rvert & = &\left\lvert \lim\limits_{\tau \nearrow t_1} \int_{0}^{\tau} \frac{\mathrm{d}}{\mathrm{d} t}\tilde{\rho}_{k,\ell} \left(A_t\right) \mathrm{d}t \right\rvert \\
		& = & \left\lvert \lim\limits_{\tau \nearrow t_1} \int_{0}^{\tau} \nabla \tilde{\rho}_{k,\ell} \left(A_t\right) \circ \frac{\mathrm{d}}{\mathrm{d} t}(A_t) \mathrm{d}t \right\rvert \, ,\\
		& \le &\lim\limits_{\tau \nearrow t_1} \int_{0}^{\tau} \left\|\nabla \tilde{\rho}_{k,\ell} \left(A_t\right) \right\| \times \left\|B-A\right\|_F \mathrm{d}t \le K_\ell \, t_1 \left\|B-A\right\|_F\, .
	\end{eqnarray*}
	By iterating the process over the intervals $(t_{l-1},t_l)$, we get
	\begin{eqnarray*}
		\left\lvert\tilde{\rho}_{k,\ell}\left(B\right)-\tilde{\rho}_{k,\ell} \left(A\right)\right\rvert & \le &\sum_{l=1}^{L+1} \left\lvert\tilde{\rho}_{k,\ell}\left(A_{t_l}\right)-\tilde{\rho}_{k,\ell} \left(A_{t_{l-1}}\right)\right\rvert \, , \\	
		& \le &\sum_{l=1}^{L+1} K_\ell \left(t_l-t_{l-1}\right) \left\|B-A\right\|_F = K_\ell \left\|B-A\right\|_F\,.
	\end{eqnarray*}
	Hence the Lipschitz property along the segment $[A,B]$.
	
	To go beyond, we proceed by density and prove that for a given matrix $\Delta$ as in Model (A), the set of matrices $(\Delta\circ A)$ such that ${\mathcal 
		H}(\Delta\circ A)$ has a simple spectrum is dense in the set of matrices $(\Delta\circ A,\ A\in \mathbb{R}^{n\times n})$.
	
	Let $P_\sigma$ be the permutation matrix used to define $\Delta$ in \eqref{eq:block-permutation} and $I_d$ the identity matrix of size $d$. We define the following $n\times n$ matrices 
	\begin{equation} \label{eq:blocdiag}
	\Pi = P_{\sigma} \otimes I_d \qquad \text{and} \qquad  D_A=(\Delta \circ A) \Pi^{\tran}\, .
	\end{equation}
	Notice that $\Pi$ is a $n\times n$ permutation matrix and that $D_A$ is a block diagonal matrix with $d\times d$ blocks on the diagonal. Since $\Pi\, \Pi^{\tran} =\Pi^{\tran} \Pi=I_n$, we also have 
	$$
	D_A\, \Pi = \Delta \circ A\, .
	$$
	In the framework of Example \ref{example}, matrices $\Pi$ and $D_A$ are given by:
	$$
	\Pi=\begin{pmatrix}
	I_d & 0 & 0 & 0 \\ 0 & 0 & 0 & I_d \\ 0 & I_d & 0 & 0 \\ 0 & 0 & I_d & 0 
	\end{pmatrix}\qquad \text{and}
	\qquad D_A =  \begin{pmatrix} A^{(1)} & 0 & 0 & 0 \\ 0 & A^{(2)} & 0 & 0 \\ 0 & 0 & A^{(3)} & 0 \\ 0 & 0 & 0 & A^{(4)}  \end{pmatrix}\, .
	$$

	An important feature of $D_A$ is that $\Delta\circ A$ and $D_A$ have the same singular values:
	$$
	D_A D_A^{\tran} = (\Delta\circ A) \,\Pi^{\tran}\, \Pi\, (\Delta \circ A)^{\tran} = (\Delta\circ A) (\Delta \circ A)^{\tran} \ ,
	$$
	hence ${\mathcal H}(\Delta\circ A)$ and ${\mathcal H}(D_A)$ have the same eigenvalues and their spectrum, if simple, is simultaneously simple.
	Denote by $(A_{(\mu)})_{\mu\in [m]}$ the $m$ diagonal $d\times d$ blocks of matrix $D_A$ and consider their SVD
	$$
	A_{(\mu)} = U_{(\mu)} \Lambda_{(\mu)} V_{(\mu)}\, .
	$$
	Consider a simultaneous $\varepsilon$-perturbation of the $\Lambda_{(\mu)}$'s into $\Lambda_{(\mu)}^{\varepsilon}$ so that all the $\Lambda_{(\mu)}^{\varepsilon}$'s have distinct diagonal elements, $\varepsilon$-close to the $\Lambda_{(\mu)}$'s. Denote by 
	$$
	A_{(\mu)}^{\varepsilon} = U_{(\mu)} \Lambda^{\varepsilon}_{(\mu)} V_{(\mu)}\, .
	$$
	and let $D^\varepsilon_A$ be the block diagonal matrix with blocks $(A_{(\mu)}^{\varepsilon})_{\mu\in [m]}$. Then ${\mathcal H}(D^\varepsilon_A)$ is arbitrarily close to ${\mathcal H}(D_A)$ and has a simple spectrum. Note that $D^\varepsilon_A\Pi$ 
	is $\varepsilon$-close to $\Delta\circ A$, is such that ${\mathcal H} ( D^\varepsilon_A\Pi )$ has a simple spectrum and 
	has the same pattern as $\Delta\circ A$ in the sense that:
	$$
	\Delta_{ij}=0\quad \Rightarrow \quad \left( D^\varepsilon_A\Pi \right)_{ij} =0\, .
	$$
	To emphasize this property, we introduce the $n\times n$ matrix $A^\varepsilon $ defined as 
	$$
	[A^\varepsilon]_{ij}= \begin{cases}
	[ D^\varepsilon_A\Pi]_{ij} & \text{if}\ \Delta_{ij}=1\, ,\\
	A_{ij}&\text{else}
	\end{cases}
	$$
	so that 
	$$\| \Delta\circ A^\varepsilon - \Delta \circ A\|_F = \|A^\varepsilon- A\|_F \xrightarrow[\varepsilon\to 0]{} 0\, .$$ 
	
	We can now conclude. Let $\Delta\circ A$, $\Delta\circ B$ be given and $D^\varepsilon_A \Pi=\Delta \circ A^\varepsilon$ and $ D^\varepsilon_B\Pi=\Delta\circ B^\varepsilon$ constructed as previously;
	notice that $C\mapsto \tilde \rho_{k,\ell}(C)$ is continuous. Then 
	\begin{eqnarray*}
		\left\lvert\tilde{\rho}_{k,\ell}(B)-\tilde{\rho}_{k,\ell}(A)\right\rvert & \le & \left\lvert\tilde{\rho}_{k,\ell} \left(B^\varepsilon\right)-\tilde{\rho}_{k,\ell}(B)\right\rvert + K_\ell \left\|B_\epsilon - A_\epsilon\right\|_F + \left\lvert\tilde{\rho}_{k,\ell} \left(A_\epsilon\right)-\tilde{\rho}_{k,\ell}(A)\right\rvert  \, ,\\
		& \xrightarrow[\epsilon \to 0]{} &K_\ell \left\|B-A\right\|_F\,.
	\end{eqnarray*}
	This concludes the proof of the Lipschitz property.
\end{proof}	

\subsection{Step 3: uniform estimate for $\mathbb{E} \tilde{R}_k(A)$}
As a consequence of the Lipschitz property of $\widetilde R_k$, $\widetilde R_k(A)$ if centered is sub-Gaussian if $A$ is a $n\times n$ matrix with i.i.d. ${\mathcal N}(0,1)$ entries. The following estimate easily follows using Tsirelson-Ibragimov-Sudakov inequality (\cite[Theorem 5.5]{bib5}). 
\begin{proposition}\label{prop:subG}
	Under the assumptions of Lemma \ref{lemma:lipsch}, the following estimate holds true:
	$$
	\mathbb{E}\max_{k\in [n]} \left(\tilde{R}_k - \mathbb{E}\tilde{R}_k\right) \le K \sqrt{2\log n}\, .
	$$
\end{proposition}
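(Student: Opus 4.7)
The plan is to combine the Lipschitz bound from Lemma \ref{lemma:lipsch} with standard Gaussian concentration, and then invoke the classical bound on the expected maximum of sub-Gaussian random variables. Since the matrix $A$ has i.i.d. $\mathcal{N}(0,1)$ entries, its vectorization is a standard Gaussian vector in $\mathbb{R}^{n^2}$, and the Frobenius norm on matrices coincides with the Euclidean norm on this vectorization. So Lemma \ref{lemma:lipsch} states precisely that $A \mapsto \tilde R_k(A)$ is $K$-Lipschitz as a function on this Gaussian space.

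The first step is to apply Tsirelson-Ibragimov-Sudakov (\cite[Theorem 5.5]{bib5}) to each fixed $k \in [n]$: the centered variable $\tilde R_k - \mathbb{E}\tilde R_k$ is sub-Gaussian with variance proxy $K^2$, in the sense that
\begin{equation*}
\mathbb{P}\bigl( \lvert \tilde R_k - \mathbb{E}\tilde R_k \rvert \ge t \bigr) \ \le\ 2 \exp\!\left( -\frac{t^2}{2K^2}\right)\qquad \text{for all } t \ge 0.
\end{equation*}
Crucially, the Lipschitz constant $K$ from Lemma \ref{lemma:lipsch} is independent of $k$, $d$, and $n$, so the same variance proxy works uniformly across all $k \in [n]$.

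The second step is the standard maximal inequality: for a finite collection $X_1,\dots,X_n$ of centered sub-Gaussian random variables, each with variance proxy $\sigma^2$ (not necessarily independent), one has $\mathbb{E}\max_{k \in [n]} X_k \le \sigma\sqrt{2\log n}$. This is proved by the usual Chernoff-type argument: for any $\lambda>0$,
\begin{equation*}
\exp\!\bigl(\lambda\, \mathbb{E}\max_k X_k\bigr)\ \le\ \mathbb{E}\exp\!\bigl(\lambda \max_k X_k\bigr)\ \le\ \sum_{k=1}^n \mathbb{E} e^{\lambda X_k}\ \le\ n\, e^{\lambda^2 \sigma^2/2},
\end{equation*}
and optimizing over $\lambda$ by taking $\lambda = \sqrt{2\log n}/\sigma$ yields the bound. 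Applied with $X_k = \tilde R_k - \mathbb{E}\tilde R_k$ and $\sigma = K$, this immediately gives the claimed estimate.

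There is no serious obstacle here: the entire content of the proposition is packaged inside Lemma \ref{lemma:lipsch}, and the remaining work is the textbook route from a dimension-free Lipschitz constant to a sub-Gaussian tail via Gaussian concentration, followed by a union bound in moment-generating form. The only point requiring a brief verification is that the Lipschitz constant in Lemma \ref{lemma:lipsch} is taken with respect to the Frobenius norm (which it is), so that Tsirelson-Ibragimov-Sudakov applies directly without any rescaling of the variance proxy.
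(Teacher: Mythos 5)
Your proof is correct and follows exactly the route the paper takes (which defers to \cite[Proposition 2.3]{bib4}): the dimension-free Lipschitz constant from Lemma \ref{lemma:lipsch} plus Tsirelson--Ibragimov--Sudakov gives a uniform sub-Gaussian variance proxy $K^2$, and the standard moment-generating-function maximal inequality yields $K\sqrt{2\log n}$. Nothing is missing.
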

For the proof, see \cite[Proposition 2.3]{bib4}.

The rest of the section is devoted to the control of $\mathbb{E}\widetilde R_k(A)$.

\begin{proposition} \label{prop:control_exp}
	Under the assumptions of Theorem \ref{th:sparse-matrix}, there exists $n_1\in \mathbb{N}$ and a constant $C>0$ such that for all $n\ge n_1$,
	$$
	\sup_{k\in [n]}\left\lvert \mathbb{E} \tilde{R}_k(A) \right\rvert \ \le\ C\, .
	$$
\end{proposition}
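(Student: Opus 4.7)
The plan is to split
\begin{equation*}
\mathbb{E}\tilde{R}_k(A) \;=\; \sum_{\ell\ge 2}\mathbb{E}\rho_{k,\ell}(A) \;-\; \sum_{\ell\ge 2}\mathbb{E}\bigl[(1-\varphi_d(A))\rho_{k,\ell}(A)\bigr]
\end{equation*}
and control each sum separately, adapting the template of \cite[Proposition 2.4]{bib4} to the sparse setting. The interchange of sum and expectation is justified because on the support of $\varphi_d$ the Neumann series is absolutely convergent ($\|\Delta\circ A/\sqrt d\|\le\kappa+1<\alpha$), and the truncation complement is handled term by term.

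For the first (main) sum, I would compute $\mathbb{E}\rho_{k,\ell}(A)$ by Wick's theorem. Writing $W:=\Delta\circ A/\sqrt d$, odd values of $\ell$ contribute zero by parity of centered Gaussian moments. For $\ell=2p$, each perfect pairing in the expansion of $\mathbb{E}[\ee_k^{\tran} W^{2p}\ones]$ enforces equalities $(i_{s-1},i_s)=(i_{t-1},i_t)$ on matched pairs $(s,t)$. A flow-conservation argument then shows that, since each directed edge is traversed an even number of times, the walk must close, giving the key identity
\begin{equation*}
\mathbb{E}[\ee_k^{\tran} W^{2p}\ones] \;=\; \mathbb{E}[(W^{2p})_{kk}].
\end{equation*}
Counting admissible walks on the $d$-regular graph defined by $\Delta$ against the product of $p$ edge-variances $1/d$ produces $\mathbb{E}[(W^{2p})_{kk}]\le C^p$ for some constant $C$, uniformly in $k$ and $n$. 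Summed against $\alpha^{-(2p-2)}$ this converges as soon as $\alpha>C$, so $|\sum_\ell\mathbb{E}\rho_{k,\ell}(A)|$ is bounded.

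For the second (truncation) sum I would use Cauchy--Schwarz term by term:
\begin{equation*}
\lvert\mathbb{E}[(1-\varphi_d)\rho_{k,\ell}]\rvert \;\le\; \bigl(\mathbb{E}\rho_{k,\ell}^2\bigr)^{1/2}\,\mathbb{P}\bigl(\|W\|\ge\kappa+1-\eta\bigr)^{1/2}.
\end{equation*}
The deterministic bound $|\rho_{k,\ell}|\le\alpha^{2-\ell}\|W\|^\ell\sqrt n$, combined with moment bounds on $\|W\|$ (consequence of Gaussian concentration with Lipschitz constant $1/\sqrt d$), gives $\mathbb{E}\rho_{k,\ell}^2\le n\,C^\ell\alpha^{-2(\ell-2)}$. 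Gaussian concentration of $\|W\|$ around its mean also furnishes $\mathbb{P}(\|W\|\ge\kappa+1-\eta)\le e^{-cd}\le n^{-c}$ whenever $d\ge\log n$, with $c$ as large as desired by the generous choice $\kappa=22$. Consequently each truncation term is at most $\alpha^2(\sqrt C/\alpha)^\ell n^{(1-c)/2}$, which is summable in $\ell$ and bounded (indeed tending to zero) in $n$ for any $c\ge 1$.

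The main obstacle is the walk-counting step producing the bound on $\mathbb{E}[(W^{2p})_{kk}]$ with the right dependence on $p$, $d$, and $n$, since naive bounds using $\|\ones\|=\sqrt n$ are off by $\sqrt n$. The flow-conservation reduction to the diagonal moment $(W^{2p})_{kk}$ is what absorbs this $\sqrt n$. For Model (B) with $d\propto n$ this mirrors the dense matrix analysis of \cite{bib4}. For Model (A) the block-permutation structure is decisive: powers $(\Delta\circ A)^\ell$ retain at most $d$ nonzero entries per row, keeping the walk count at order $d^p$ rather than $n^p$ and matching the product of $p$ variances $d^{-p}$ to yield the desired $n$-independent bound.
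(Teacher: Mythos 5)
Your opening decomposition $\mathbb{E}\widetilde R_k=\sum_{\ell\ge2}\mathbb{E}\rho_{k,\ell}-\sum_{\ell\ge2}\mathbb{E}\bigl[(1-\varphi_d(A))\rho_{k,\ell}\bigr]$ is where the argument breaks. On the event $\{\|\Delta\circ A/\sqrt d\|\ge\alpha\}$, which has positive probability for every finite $n$, neither series converges pointwise (there $1-\varphi_d(A)=1$ and $\lvert\rho_{k,\ell}\rvert$ grows geometrically in $\ell$), so the split is not well defined: only the combination $\sum_\ell\varphi_d(A)\rho_{k,\ell}(A)$ converges. The same problem resurfaces in your quantitative claims. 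For fixed $n$ the moments $\mathbb{E}\|W\|^{2\ell}$ grow super-exponentially in $\ell$, and since all Wick pairings contribute nonnegatively, so does $\mathbb{E}[(W^{2p})_{kk}]$ (already $\mathbb{E}A_{11}^{2p}=(2p-1)!!$ when $n=d=1$; in general, traversing a directed cycle through $k$ repeatedly produces a contribution of order $((2p/L-1)!!)^{L}d^{-p}$). Hence the bounds $\mathbb{E}[(W^{2p})_{kk}]\le C^p$ and $\mathbb{E}\rho_{k,\ell}^2\le n\,C^\ell\alpha^{-2(\ell-2)}$ cannot hold uniformly in $p$ and $\ell$ — they are valid only for $\ell$ small compared with $d$ — and both of your $\ell$-sums diverge; the $\ell$-independent factor $\mathbb{P}(\|W\|\ge\kappa+1-\eta)^{1/2}$ does not restore summability. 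The truncation $\varphi_d$ is precisely what kills the large-$\ell$ tail of the Neumann series and cannot be peeled off term by term. (Your flow-conservation identity $\mathbb{E}[\ee_k^{\tran}W^{2p}\ones]=\mathbb{E}[(W^{2p})_{kk}]$ is correct and is a nice observation, but it does not rescue the summation.)

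The paper never separates the truncation from the series and needs no Gaussian moment computation at all. For Model (A) it uses exchangeability of the $d$ rows within a block to write $\mathbb{E}\widetilde R_{\mu_k}=\frac1d\,\mathbb{E}\bigl[\varphi_d(A)\,\ones^{(\mu)\tran}(\Delta\circ A/\sqrt d)^2\,Q\,\ones\bigr]$, then exploits the block-permutation structure of $(\Delta\circ A)^\ell$ to replace $\ones$ (of norm $\sqrt n$) by a vector $\ones^{(\nu)}$ of norm $\sqrt d$ in each term — this is the step that absorbs the $\sqrt{n/d}$, playing the role you assign to flow conservation — and finally bounds the resulting geometric series deterministically on the support of $\varphi_d$ by $(1+\kappa)^2/\delta$. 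If you wish to keep a Wick-type computation you would have to restrict it to $\ell\lesssim\log n$, say, and control the remainder of the series with the truncation kept inside the expectation; as written, the proof does not go through.
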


\begin{proof} 
	Recall that $n=d\times m$ and that $\Delta\circ A$ is a block permutation matrix with $m$ blocks $(A^{(\mu)})_{\mu\in [m]}$ of size $d\times d$. We choose a given block $A^{(\mu)}$ and denote by $\mu_1,\cdots, \mu_d$ the $d$ indices corresponding to the rows of block $A^{(\mu)}$ in $\Delta\circ A$. By exchangeability, we have 
	$$
	\forall k \in [d]\,,\quad  \mathbb{E} \tilde{R}_{\mu_k}(A) = \mathbb{E} \tilde{R}_{\mu_1}(A)\, .
	$$
	Denote by $\ones^{(\mu)}$ the $n\times 1$ vector with ones for the indices $(\mu_i)_{i\in [d]}$ and zeros elsewhere. We have
	\begin{eqnarray}
	\left\lvert\mathbb{E} \widetilde{R}_{\mu_k}(A) \right\rvert & = &\left\lvert\frac{1}{d} \sum_{i=1}^{d} \mathbb{E} \widetilde{R}_{\mu_i}(A)\right\rvert \nonumber \\
	& = &\left\lvert\frac{1}{d} \sum_{i=1}^{d} \mathbb{E} \left(\varphi_{d} (A) \ee_{\mu_i}^\tran \left(\frac{\Delta\circ A}{\sqrt{d}}\right)^2 \left(I - \frac{\Delta \circ A}{\alpha \sqrt{d}}\right)^{-1} \ones \right)\right\rvert\, ,\nonumber \\
	& = &\left\lvert\frac{1}{d} \mathbb{E} \left(\varphi_{d} (A)  \ones^{(\mu)\tran} \left(\frac{\Delta \circ A}{\sqrt{d}}\right)^2 \left(I - \frac{\Delta \circ A}{\alpha \sqrt{d}}\right)^{-1} \ones \right)\right\rvert \, ,\nonumber\\
	& \le &\mathbb{E} \left\lvert\varphi_{d} (A)  \frac{\ones^{(\mu)\tran}}{\sqrt{d}} \left(\frac{\Delta \circ A}{\sqrt{d}}\right)^2 \left(I - \frac{\Delta \circ A}{\alpha \sqrt{d}}\right)^{-1} \frac{\ones}{\sqrt{d}}\right\rvert\, .\label{eq:mean-estimate}
	\end{eqnarray}
	We start by expanding $\left(I - \frac{\Delta \circ A}{\alpha \sqrt{d}}\right)^{-1}$ :
	$$
	\left\lvert\varphi_{d} (A)  \frac{\ones^{(\mu)\tran}}{\sqrt{d}} \left(\frac{\Delta \circ A}{\sqrt{d}}\right)^2 \left(I - \frac{\Delta \circ A}{\alpha \sqrt{d}}\right)^{-1} \frac{\ones}{\sqrt{d}}\right\rvert = \left\lvert\varphi_{d} (A) \sum_{\ell=2}^{\infty} \frac{\ones^{(\mu)\tran}}{\alpha^{\ell-2}\sqrt{d}} \left(\frac{\Delta \circ A}{\sqrt{d}}\right)^\ell \frac{\ones}{\sqrt{d}}\right\rvert\, .
	$$
	Notice that $(\Delta \circ A)^\ell$ is a block matrix constituted of $m$ blocks of size $d\times d$. In particular, among the $d$ row 
	$$
	\left( \left[ (\Delta\circ A)^\ell\right]_{ij} \right)_{i\in \{\mu_1,\cdots, \mu_d\},\, j\in [n]} \ ,
	$$
	there exist $\nu_1,\cdots, \nu_d$ (consecutive) indices such that the only non-null entries are
	$$
	\left( \left[ (\Delta\circ A)^\ell\right]_{ij} \right)_{i\in \{\mu_1,\cdots, \mu_d\},\, j\in \{\nu_1,\cdots, \nu_d\}}\, . 
	$$
	Denote by $\mathbf{1}^{(\nu)}$ the $n\times 1$ vector of ones for the indices $(\nu_i)_{i\in [d]}$ and zeroes elsewhere. As a consequence of the previous remark, 
	$$
	\mathbf{1}^{(\mu)\tran} (\Delta\circ A)^{\ell} \mathbf{1} = \mathbf{1}^{(\mu)\tran} (\Delta\circ A)^{\ell} \mathbf{1}^{(\nu)}
	$$
	and
	\begin{eqnarray*}
		\frac{1}{\alpha^{\ell-2}}\left\lvert\frac{\ones^{(\mu)\tran}}{\sqrt{d}} \left(\frac{\Delta \circ A}{\sqrt{d}}\right)^\ell \frac{\ones}{\sqrt{d}}\right\rvert & =& 
		\frac{1}{\alpha^{\ell-2}}\left\lvert\frac{\ones^{(\mu)\tran}}{\sqrt{d}} \left(\frac{\Delta \circ A}{\sqrt{d}}\right)^\ell \frac{\ones^{(\nu)}}{\sqrt{d}}\right\rvert \\
		& \le & \frac{1}{\alpha^{\ell-2}} \left\|\frac{\ones^{(\mu)}}{\sqrt{d}}\right\|  \left\|\frac{\Delta \circ A}{\sqrt{d}}\right\|^{\ell} \left\|\frac{\ones^{(\nu)}}{\sqrt{d}}\right\| \, ,\\
		& \le &\left\|\frac{\Delta \circ A}{\alpha\sqrt{d}}\right\|^{\ell-2} \left\|\frac{\Delta \circ A}{\sqrt{d}}\right\|^2\, .
	\end{eqnarray*}
	Let $\kappa>0$ as in Proposition \ref{prop:spectral-norm}, $\delta\in(0,1), n_0\in \mathbb{N}$ as in Lemma \ref{lemma:lipsch}, then 
	\begin{eqnarray*}
		\sum_{\ell=2}^{\infty} \frac{1}{\alpha^{\ell-2}} \left\lvert\varphi_{d} (A) \frac{\ones^{(\mu)*}}{\sqrt{d}} \left(\frac{\Delta \circ A}{\sqrt{d}}\right)^\ell \frac{\ones}{\sqrt{d}}\right\rvert 
		& \le &\varphi_{d} (A) \sum_{\ell=2}^{\infty} \left\|\frac{\Delta \circ A}{\alpha\sqrt{d}}\right\|^{\ell-2} \left\|\frac{\Delta \circ A}{\sqrt{d}}\right\|^2 \, ,\\
		& = &\varphi_{d} (A) \left\|\frac{\Delta \circ A}{\sqrt{d}}\right\|^2 \sum_{\ell=0}^{\infty} \left\|\frac{\Delta \circ A}{\alpha\sqrt{d}}\right\|^{\ell} \\
		& \le & (1+\kappa)^2 \sum_{\ell=2}^\infty (1-\delta)^\ell\, ,\\
		&\le & \frac{(1+\kappa)^2}{\delta}  \ .
	\end{eqnarray*}
	Plugging this estimate into \eqref{eq:mean-estimate} concludes the proof of the estimation of $\lvert\mathbb{E}\widetilde R_{\mu_k}(A)\rvert$. This estimate being uniform over $\mu_1,\cdots,\mu_d$ and over all the blocks $(A^{(\mu)})$, the proposition is proved.

\end{proof}

\subsection{Proof of lemma \ref{lemma:main}} \label{sec:proof_lemma_main}
Combining Lemma \ref{lemma:lipsch}, Propositions \ref{prop:subG} and \ref{prop:control_exp} one can prove Lemma \ref{lemma:main} as in \cite[Section 2.3]{bib4} with minor adaptations.

\section{Proof of Theorem \ref{th:sparse-matrix} for Model (B)} \label{section:proof-main-B}

We assume that $\Delta_n$ follows Model (B). 

The strategy of proof closely follows the one in \cite{bib4}, with one specific issue to handle: the uniform bound on $\mathbb{E} \widetilde R_k$.
An important property exploited in \cite{bib4} to establish a uniform bound over $\mathbb{E} \widetilde R_k$ was the exchangeability of the $\widetilde R_k$'s (or block exchangeability in the case of Model (A)). There is not enough structure in Model (B) to guarantee this exchangeability (which might not hold). 

We carefully address this issue hereafter.

\subsection{A uniform bound over $\mathbb{E} \widetilde R_k$ for Model (B)}\label{sec:issue2}

\begin{proposition} \label{prop:control_exp_modelB}
	Under the assumptions of Theorem \ref{th:sparse-matrix}, uniformly in $k \in [n]$,
	$$
	\mathbb{E} \tilde{R}_k = \mathcal{O} \left( \frac{\alpha}{\sqrt{d}} \right) \, .
	$$
\end{proposition}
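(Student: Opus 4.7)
The fundamental obstacle in Model (B) is that the exchangeability argument used in Model (A)---which allowed us to rewrite $\mathbb{E}[\tilde R_{\mu_k}]$ as an average over a block and thereby exploit the block sparsity of $(\Delta\circ A)^\ell$---is no longer available for a generic $d$-regular graph. The plan is to estimate $\mathbb{E}[\tilde R_k]$ directly via Gaussian integration by parts (Stein's identity), taking advantage of the fact that in Model (B) the ratio $\sqrt n/d$ is exactly of order $1/\sqrt d$ since $d\propto n$.

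Concretely, I would first rewrite $\tilde R_k = \varphi_d(A)\,\ee_k^{\tran} N^2 Q\ones$ where $Q=(I-N/\alpha)^{-1}$ and $N=(\Delta\circ A)/\sqrt d$, both well-defined on the support of $\varphi_d$, and expand
$$\ee_k^{\tran} N^2 Q\ones \;=\; \frac{1}{\sqrt d}\sum_i \Delta_{ki}\,A_{ki}\,(NQ\ones)_i.$$
For each $i$ with $\Delta_{ki}=1$, Gaussian IBP on $A_{ki}$ produces three families of terms: (i) a term from $\partial_{A_{ki}}\varphi_d=\varphi'(\|N\|)\Delta_{ki}u_kv_i/\sqrt d$, involving the unit left/right singular vectors $u,v$ of $\Delta\circ A$; (ii) a Kronecker-type term from $\partial_{A_{ki}}(NQ\ones)_i$ which is non-zero only for $i=k$ and yields $\Delta_{kk}\mathbb{E}[\varphi_d\,x_k]/d$; (iii) a resolvent-derivative term based on $\partial_{A_{ki}}Q=\Delta_{ki}\,Q\ee_k\ee_i^{\tran}Q/(\alpha\sqrt d)$, contributing something of the form $\sum_i \Delta_{ki}x_i(NQ)_{ik}/(\alpha\sqrt d)$.

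Using the truncation bounds $\|N\|\le\kappa+1$ and $\|Q\|=\mathcal{O}(1)$ on the support of $\varphi_d$, together with a Cauchy-Schwarz over $i$ exploiting $\|u\|=\|v\|=1$ and $\|\ones\|=\sqrt n$, the contributions of (i) and (iii) are respectively $\mathcal{O}(\sqrt n/d)$ and $\mathcal{O}(\sqrt n/(\alpha d))$, both of order $1/\sqrt d$ in Model (B). For (ii), the symmetry $A\leftrightarrow -A$ gives $\mathbb{E}[\varphi_d z_k]=0$, so $\mathbb{E}[\varphi_d x_k]=\mathbb{E}[\varphi_d]+\mathbb{E}[\tilde R_k]/\alpha^2$; plugging back yields a self-consistent inequality $|\mathbb{E}[\tilde R_k]|\,(1-1/(d\alpha^2))\le C\sqrt n/d$, from which $|\mathbb{E}[\tilde R_k]|=\mathcal{O}(1/\sqrt d)$ uniformly in $k$, which in particular implies the claimed $\mathcal{O}(\alpha/\sqrt d)$ bound.

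The hard part will be justifying Gaussian IBP for a function of $A$ that is only a.e.\ differentiable: $\varphi_d$ depends on the spectral norm $\|N\|$, which fails to be smooth where the top singular value of $\Delta\circ A$ is not simple. As in the proof of Lemma \ref{lemma:lipsch}, this is handled by approximating $A$ by matrices with simple top singular value via the SVD perturbation in the block-diagonal representation and then passing to the limit using the Lipschitz control of $\tilde R_k$. The bookkeeping of the three derivative terms is tedious but routine given the Cauchy-Schwarz estimates above, which are uniform in $k$.
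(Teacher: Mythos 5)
Your proposal is essentially the paper's proof: expand $\tilde R_k$ as a sum over $i$ with $\Delta_{ki}=1$, apply Gaussian integration by parts to $A_{ki}$, compute the derivative of $\varphi_d$ through the singular vectors $u,v$ of $\Delta\circ A$ and the derivative of the resolvent through $\partial_{A_{ki}}Q=\tfrac{1}{\alpha\sqrt d}Q\ee_k\ee_i^{\tran}Q$, and close the estimates by Cauchy--Schwarz using $\sqrt n/d=\mathcal O(1/\sqrt d)$ under Model (B). The only bookkeeping difference is that the paper first uses the resolvent identity $(\Delta\circ A)Q=\alpha\sqrt d\,(Q-I)$ before integrating by parts, so your diagonal Kronecker term (ii) and the accompanying self-consistency step do not appear there; this is harmless, and your direct bounds even give the slightly stronger $\mathcal O(1/\sqrt d)$, which of course implies the stated $\mathcal O(\alpha/\sqrt d)$.

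One point in your last paragraph needs fixing. You propose to justify the a.e.\ differentiability of $\varphi_d$ ``via the SVD perturbation in the block-diagonal representation'' as in Lemma \ref{lemma:lipsch}. That device is specific to Model (A): it relies on writing $\Delta\circ A=D_A\Pi$ with $D_A$ block diagonal, which has no analogue for a generic $d$-regular adjacency matrix in Model (B). The paper instead proves (Appendix \ref{app:simplicity}, Lemma \ref{lemma:cook}) that for Gaussian $A$ and any $d$-regular $\Delta$ the singular values of $\Delta\circ A$ are almost surely distinct — an algebraic-variety argument combined with Birkhoff's theorem to exhibit one matrix in $\mathcal E_\Delta$ with distinct singular values. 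With that, $\|\Delta\circ A\|$ is a.s.\ differentiable with the explicit derivative \eqref{eq:partial-derivative}, and the integration by parts formula applies directly to the Lipschitz function $\varphi_d$ without any approximation step. Your argument goes through once this substitution is made.
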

Proof of Proposition \ref{prop:control_exp_modelB} relies on two important facts.
\begin{itemize}
	\item The fact that almost surely ${\mathcal H}(\Delta\circ A)$ has a simple spectrum, hence the Lipschitz function $\| \Delta\circ A\|$ is almost surely differentiable with an explicit formula for the partial derivatives, see \eqref{eq:partial-derivative}. Details are provided in Appendix \ref{app:simplicity}. 
	\item The Gaussian integration by parts (i.b.p.) formula: If $Z\sim {\mathcal N}(0,1)$ then $\mathbb{E}\, Zf(Z) =\mathbb{E} f'(Z)$. Interestingly, this formula holds for $f$ Lipschitz. In this case, $f$ is absolutely continuous hence almost surely differentiable (see for instance \cite[Chap. 7, Thm. 4]{bib24}) with linear growth at infinity. 
\end{itemize}
Recall that $\varphi_d(A)=\varphi\left( \frac{\|\Delta\circ A\|}{\sqrt{d}}\right)$, where $\varphi$ is defined in \eqref{eq:def-varphi}.
\begin{proof}
	In order to get an asymptotic bound over $\mathbb{E} \widetilde R_k(A)$, we expand its expression:
	\begin{eqnarray*}
		\mathbb{E}\widetilde R_k(A) & = & \mathbb{E} \left[\varphi_d(A) \ee_k^\tran \left( \frac{\Delta \circ A}{\sqrt{d}}\right)^2 Q \ones\right] \, , \\
		& = & \frac{1}{d} \sum_{i \in \mathcal{I}_k} \sum_{j \in [n]} \mathbb{E} \left[\varphi_d(A) (\Delta \circ A)_{ki} \left( (\Delta \circ A) Q\right)_{ij}\right] \, , \\
		& = & \frac{\alpha}{\sqrt{d}} \sum_{i \in \mathcal{I}_k} \sum_{j \in [n]} \mathbb{E} \left[\varphi_d(A) (\Delta \circ A)_{ki} \left( - \delta_{ij} +Q_{ij}\right) \right] \, , \\
		& = & -\frac{\alpha}{\sqrt{d}} \sum_{i \in \mathcal{I}_k} \mathbb{E} \left[\varphi_d(A) (\Delta \circ A)_{ki} \right] + \frac{\alpha}{\sqrt{d}} \sum_{i \in \mathcal{I}_k} \sum_{j \in [n]} \mathbb{E} \left[\varphi_d(A) (\Delta \circ A)_{ki} Q_{ij} \right] \, .
	\end{eqnarray*}
	At this point, we use the Gaussian i.b.p. formula applied to $A\mapsto \varphi_d(A)$ which is Lipschitz and a.s. differentiable with explicit derivative (see \eqref{eq:partial-derivative}).
	\begin{eqnarray*}
		\mathbb{E}\widetilde R_k(A) & = & -\frac{\alpha}{\sqrt{d}} \sum_{i \in \mathcal{I}_k} \mathbb{E} \left[\partial_{ki} \varphi_d(A) \right] + \frac{\alpha}{\sqrt{d}} \sum_{i \in \mathcal{I}_k} \sum_{j \in [n]} \mathbb{E} \left[\partial_{ki} \left(\varphi_d(A) Q_{ij} \right)\right] \, , \\
		& = & -\frac{\alpha}{d} \sum_{i \in \mathcal{I}_k} \mathbb{E} \left[ u_kv_i\varphi' \left( \frac{\|\Delta\circ A\|}{\sqrt{d}} \right) \right] + \frac{\alpha}{d} \sum_{i \in \mathcal{I}_k} \sum_{j \in [n]} \mathbb{E} \left[u_kv_i\varphi' \left( \frac{\|\Delta\circ A\|}{\sqrt{d}}\right)Q_{ij} \right] \\
		&&\qquad  + \frac{\alpha}{\sqrt{d}} \sum_{i \in \mathcal{I}_k} \sum_{j \in [n]} \mathbb{E} \left[\varphi_d(A) \left( \partial_{ki}Q_{ij} \right)\right]\, , \\
		& =: & T_1 + T_2 + T_3\, .
	\end{eqnarray*}
	We first handle the term $T_1$ by Cauchy-Schwarz inequality:
	\begin{eqnarray*}
		\left\lvert T_1 \right\rvert & \le & \frac{\alpha}{d} \mathbb{E} \left\lvert u_k \sum_{i \in \mathcal{I}_k} v_i \varphi'\left(\frac{\|\Delta\circ A\|}{\sqrt{d}} \right) \right\rvert \, , \\
		& \le &  \frac{\alpha}{d} \mathbb{E} \left[ \sqrt{d} \left\|\boldsymbol{v}\right\| \left\lvert\varphi'\left(\frac{\|\Delta\circ A\|}{\sqrt{d}}\right)\right\rvert \right] \quad =\quad \mathcal{O} \left( \frac{\alpha}{\sqrt{d}}\right).
	\end{eqnarray*}
	We now handle the term $T_2$ :
	\begin{eqnarray*}
		\left\lvert T_2 \right\rvert & \le & \frac{\alpha}{d} \mathbb{E} \left[ \left\lvert \varphi' \right\rvert \left\lvert \sum_i \sum_j v_iQ_{ij} \right\rvert\right] \, , \\
		& \le & \frac{\alpha}{d} \mathbb{E} \left[ \left\lvert \varphi'\right\rvert \cdot \left\lvert \boldsymbol{v}^* Q  \ones \right\rvert \right] 
		\ \le \ \frac{\alpha}{\sqrt{d}} \sqrt{\frac{n}{d}} \mathbb{E} \left[ \left\lvert\varphi'\right\rvert \left\| Q \right\| \right] \quad =\quad \mathcal{O} \left( \frac{\alpha}{\sqrt{d}}\right).
	\end{eqnarray*}
	We finally handle the term $T_3$. Notice that $\partial_{ki} Q_{ij} = \frac 1{\alpha\sqrt{d}} Q_{ik} Q_{ij}$ and denote by $\boldsymbol{\omega}:=(Q_{ik}1_{{\mathcal I}_k})_{i \in [n]}$. Notice that $\|\boldsymbol{\omega}\|^2 \le \ee^*_k Q^* Q\ee_k$
	hence $\| \boldsymbol{\omega}\| \le \| Q\|$ and 
	\begin{eqnarray*}
		\left\lvert T_3 \right\rvert & = & \frac{1}{d} \left\lvert{\mathbb{E}\left[ \varphi_d(A) \boldsymbol{\omega}^* Q \ones \right] }\right\rvert\quad \le \quad \frac{1}{d} \mathbb{E}\left[ \varphi_d(A) \|\boldsymbol{\omega}\|\, \|Q\|\, \|\ones\| \right] \, ,  \\
		& \le &  \frac{\sqrt{n}}{d} \mathbb{E} \left[ \varphi_d(A) \left\|Q\right\|^2 \right]\ =\ {\mathcal O}\left(\frac 1{\sqrt{d}}\right)\, .
	\end{eqnarray*}
	Combining these asymptotic notations finally yields :
	\begin{equation*}
	\mathbb{E} \widetilde R_k(A) = \mathcal{O} \left( \frac{\alpha}{\sqrt{d}} \right)\,.
	\end{equation*}
\end{proof}
Notice that even if the bound obtained in Proposition \ref{prop:control_exp_modelB} is weaker than the one obtained in Proposition \ref{prop:control_exp} or in \cite[Prop. 2.4]{bib4}, it is still sufficient to establish the feasibility under Model (B).

\section{Proofs of Theorem \ref{th:global stability} and Proposition \ref{prop:stability}}\label{section:proof-stability}

\subsection{Proof of Theorem \ref{th:global stability}}

The proof is a combination of Takeuchi and Adachi's theorem \cite[Theorem 3.2.1]{bib19} and Proposition \ref{prop:spectral-norm}. We first recall the definition of Volterra-Liapunov stability, see for instance \cite[Section 3.2]{bib19}: Let $B$ be a $n\times n$ real matrix.
$B$ is \textit{\textbf{Volterra-Liapunov stable}} if there exists a $n\times n$ positive definite diagonal matrix $D$ such that $DB+B^{\tran}D$ is negative definite.

Going back to Eq. \eqref{eq:LV}, according to Takeuchi and Adachi's theorem \cite[Th. 3.2.1]{bib19}, this LV system has a unique nonnegative and globally stable equilibrium if $M_n-I_n$ is Volterra-Liapunov stable. 

We now rely on the asymptotic spectral properties of $M_n$ to study the Volterra-Liapunov stability of $M_n-I_n$. We drop the subscript $n$ in the sequel. Take $D=I$ then $$
D(M-I)+(M-I)^{\tran}D=M+M^{\tran}-2I$$ is an hermitian matrix. This matrix is negative definite if all its eigenvalues are negative. Given that $M+M^{\tran}$ is also hermitian, we just have to check that the spectral radius $\rho\left(M+M^{\tran}\right)<2$. According to Proposition \ref{prop:spectral-norm}:
$$
\mathbb{P} \left(\rho \left(M+M^{\tran}\right) < 2 \right)\quad  \ge\quad  \mathbb{P} \left(\left\|M \right\|<1 \right) \quad \xrightarrow[n \to +\infty]{}\quad  1.
$$
Thus, the probability that $M-I$ is Volterra-Liapunov stable converges to 1 as $n\to\infty$. By \cite[Th. 3.2.1]{bib19}, this implies that the probability that the LV system \eqref{eq:LV} has a unique nonnegative and globally stable equilibrium converges to 1 as $n\to\infty$.

\subsection{Proof of Proposition \ref{prop:stability}}

We first prove the first part of the proposition. By Theorem \ref{th:global stability}, there exists a unique nonnegative globally stable equilibrium to \eqref{eq:LV}. If there exists $\epsilon >0$ such that eventually $\alpha_n \ge (1+\epsilon) \alpha_n^*$ where $\alpha_n^*=\sqrt{2\log n}$, then this equilibrium $\bs{x}_n$ is positive by Theorem $\ref{th:sparse-matrix}$ with overwhelming probability as $n\to \infty$.

The rest of the proof closely follows the proof of \cite[Corollary 1.4]{bib4} and is omitted.

\section{Conclusion}\label{sec:conclusion}

In this article we study the feasibility and stability of sparse large ecosystems modelled by a large Lotka-Volterra system of coupled differential equations:
$$
\frac{d\,\boldsymbol{x}_n}{dt}  = \boldsymbol{x}_n(\boldsymbol{1}_n - \boldsymbol{x}_n + M_n \boldsymbol{x}_n)\, .
$$

Our work is motivated by recent research \cite{bib6} which suggests that in the light of many ecological and biological datasets living networks are often sparse. It also illustrates the interest to study feasibility in relation with the normalization of the interaction matrix's entries beyond the non-sparse full i.i.d. models, and opens perspectives to study models with more structure such as elliptic interactions or patch models.

In the model under investigation, the interaction matrix $M_n$ is a sparse random matrix, where the sparsity is encoded by a patterned matrix $\Delta_n$ based on an underlying $d_n$-regular graph, and the randomness by i.i.d. random variables (matrix $A_n$) for non-null entries. The single parameter $d_n$ of the regular graph provides an easy one-dimensional parametrization of the connectance of the foodweb. 

Our main conclusion is that beyond the standard normalization $1/\sqrt{d_n}$ of the interaction matrix $\Delta\circ A$, which guarantees a bounded norm 
$$
\left\| \frac{\Delta \circ A}{\sqrt{d}}\right\|={\mathcal O}_P(1)\ ,
$$
an extra factor $1/\alpha_n$ with $\alpha_n\to \infty$ is needed to reach feasibility. The interaction matrix finally writes
$$
M_n=\frac{\Delta_n\circ A_n}{\alpha_n\sqrt{d_n}} 
$$
and a sharp phase transition occurs at $\alpha_n^*=\sqrt{2\log(n)}$. Interestingly, the same phase transition as in the non-sparse case occurs. 

\begin{figure}[h]
	\centering
	\includegraphics[scale=0.7]{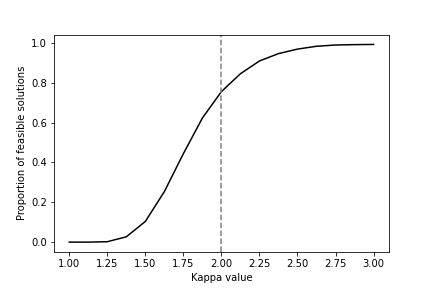}
	\caption{Let $n=15000$, $d=10$ (notice that $d\ge \log(n)\simeq 9.61$). Matrix $\Delta_n$ is drawn at random once for all among the adjacency matrices of $d$-regular graphs (and a priori does not follow Model (A)). Each point of the curve represents the proportion of feasible solutions $\bs{x}_n$ of Eq. \eqref{eq:equilibrium-A} over $1500$ realizations of random matrices $A_n$ for different values of $\kappa$, with $\alpha_n=\sqrt{\kappa \log(n)}$. The phase transition resemble those of Figure \ref{fig:simu-model-A}.} \label{fig:non-struct-simu}
\end{figure}

In the sparse setting $\log(n)\le d_n \ll n$, we rely on an extra block-structure assumption over matrix $\Delta_n$, namely Model (A), to establish the feasibility and the phase transition. Our method of proof crucially relies on this technical assumption which somehow concentrates the non-null entries of the sparse interaction matrix (and its powers) into localized blocks. 

However simulations (cf. Fig \ref{fig:non-struct-simu}) suggest that this block structure assumption is not necessary and could be relaxed. Hence the following:
\begin{open}
	Let $\Delta_n$ the adjacency matrix of a deterministic $d_n$regular graph, with $d_n\ge \log(n)$, and $A_n$ a random matrix with i.i.d. ${\mathcal N}(0,1)$ entries. Consider the equation 
	$$
	\boldsymbol{x}_n =\boldsymbol{1}_n +\frac{\Delta_n\circ A_n}{\alpha_n \sqrt{d_n}} \boldsymbol{x}_n\, ,\quad \alpha_n \to \infty\, .
	$$
	Is it true that the same phase transition as in Theorem \ref{th:sparse-matrix} holds?
\end{open}

\begin{appendices}

	\section{With probability one, the singular values of a sparse random matrix are distinct}\label{app:simplicity}
	We establish hereafter that with probability one the singular values of matrix $\Delta\circ A$ are distinct, a key argument in the proof of Proposition \ref{prop:control_exp_modelB} to compute the partial derivatives of $A\mapsto \|\Delta \circ A\|$.  
	
	The lemma below and its proof are inspired by Nick Cook \cite{bib28}, whom we thank for his help.
	
	\begin{lemma}[Cook \cite{bib28}]\label{lemma:cook}
		Let $n\ge 1$, $A_n$ a $n\times n$ matrix with i.i.d. ${\mathcal N}(0,1)$ entries and $\Delta_n$ the adjacency matrix of a $d$-regular graph. Then with probability one, all the singular values of $\Delta_n\circ A_n$ are distinct.
	\end{lemma}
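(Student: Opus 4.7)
My plan is to reduce the statement to showing that a certain polynomial in the free entries of $A$ is not identically zero, and then conclude by absolute continuity of the Gaussian measure with respect to Lebesgue measure. Let $S=\{(i,j):\Delta_{ij}=1\}$, so $|S|=nd$, and treat $\{A_{ij}\}_{(i,j)\in S}$ as coordinates on $\mathbb{R}^{|S|}$; the remaining entries of $A$ do not affect $\Delta\circ A$. I will work with the polynomial
\[
f(B):=\operatorname{Disc}_x\det\bigl(xI-(\Delta\circ B)(\Delta\circ B)^{\tran}\bigr),
\]
viewed as an element of $\mathbb{R}[B_{ij}:(i,j)\in S]$. Since the characteristic polynomial of $(\Delta\circ B)(\Delta\circ B)^{\tran}$ has a repeated root exactly when $\Delta\circ B$ has a repeated singular value, the lemma reduces to showing that $f$ is not identically zero.

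To produce an explicit $B^\star$ with $f(B^\star)\neq 0$, I will exploit the fact that $\Delta$ is a $0/1$ matrix whose row sums and column sums all equal $d$. By König's edge-coloring theorem applied to the associated $d$-regular bipartite graph, $\Delta$ decomposes as $\Delta=P_1+\cdots+P_d$ where each $P_k$ is an $n\times n$ permutation matrix and the supports of the $P_k$ partition $S$. I then set
\[
B^\star_{ij}=\begin{cases} i & \text{if } (i,j)\in\operatorname{supp}(P_1),\\ 0 & \text{otherwise.}\end{cases}
\]
This yields $\Delta\circ B^\star=DP_1$ with $D=\diag(1,2,\ldots,n)$. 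Because $P_1$ is orthogonal, the singular values of $\Delta\circ B^\star$ are exactly $\{1,2,\ldots,n\}$, pairwise distinct, so $f(B^\star)\neq 0$.

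The conclusion is then standard: the zero set of a nonzero polynomial on $\mathbb{R}^{|S|}$ is a proper algebraic subvariety and therefore has Lebesgue measure zero; since the random vector $(A_{ij})_{(i,j)\in S}$ has a density with respect to Lebesgue measure on $\mathbb{R}^{|S|}$, we obtain $\mathbb{P}(f(A)=0)=0$, which is precisely the claim. The only nontrivial ingredient beyond measure-theoretic generalities is the König/Birkhoff matching decomposition of $\Delta$; once that is in hand, concentrating the weights on a single perfect matching reduces the problem to the trivial case of a weighted permutation matrix, which is what guarantees nonvanishing of the discriminant. I do not expect any other step to present real difficulty.
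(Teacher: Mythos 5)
Your proof is correct and follows essentially the same route as the paper: reduce to the non-vanishing of the discriminant polynomial on the affine space of matrices supported on $\Delta$, then exhibit a witness of the form $\diag(1,\dots,n)P$ for a permutation matrix $P$ whose support lies in that of $\Delta$. The only (immaterial) difference is that you extract $P$ via K\"onig's edge-coloring theorem while the paper invokes Birkhoff's theorem on the doubly stochastic matrix $\Delta/d$; both yield the same witness and the same measure-zero conclusion.
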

	\subsubsection*{Remark}
	The original statement of Cook is slightly more general: matrix $A_n$ entries only need a distribution with positive density, and the deterministic matrix $\Delta_n$ only needs a generalized diagonal, i.e. $(\Delta_{i\sigma(i)};\, i\in [n])$ for some $\sigma\in {\mathcal S}_n$, with $n-1$ non null entries. 
	\begin{proof}
		Let $\mathcal{E}_\Delta$ be the set of matrices with entries supported on the nonzero entries of $\Delta$,
		\begin{equation*}
		\mathcal{E}_\Delta = \left\{ \Delta \circ X\, ; \  X=(X_{ij}) \in \mathbb{R}^{n\times n} \right\}.
		\end{equation*}
		Thus, $\mathcal{E}_\Delta$ is the support of the law of $\Delta\circ A$. Besides, $\mathcal{E}_\Delta$ is a variety as a subspace of $\mathbb{R}^{n \times n}$. 
		
		Let $\mathcal{R}$ denote the set of matrices with a repeated singular value. It is the set of $n\times n$ matrices $X$ for which the characteristic polynomial $p$ of $X^{\tran}X$ has zero discriminant ($\rho$), see for instance \cite[Section 3.3.2]{bib29}.
		$$
		\mathcal{R} \ = \ \left\{ X \in \mathbb{R}^{n\times n}\,;\  \rho \left( p \left(X^{\tran}X \right) \right) = 0\right\}\ 
		= \ \left\{ X \in \mathbb{R}^{n\times n}\,;\  P \left( X \right) = 0\right\} \, ,
		$$
		where $P:\mathbb{R}^{n \times n} \rightarrow \mathbb{R}$ defined by $P(X)=\rho(p(X^{\tran}X))$ is a polynomial in the entries of $X$.
		It follows that $\mathcal{R}$ is an algebraic variety in $\mathbb{R}^{n \times n}$.
		
		Hence, $\mathcal{E}_\Delta \cap \mathcal{R}$ is either equal to $\mathcal{E}_\Delta$, or a subvariety of $\mathcal{E}_\Delta$ of zero Lebesgue measure (under the product measure on $\mathcal{E}_{\Delta}$). 
		
		For the claim, it suffices to show that $\mathcal{E}_\Delta \not\subset \mathcal{R}$ hence to exhibit $Y\in \mathcal{E}_\Delta$ with distinct singular values. By Birkhoff's theorem \cite[Theorem 8.7.2]{bib12}, the doubly stochastic matrix $\Delta/d$ writes
		$$
		\frac{\Delta}{d} = \sum_{\sigma\in {\mathcal S}_n} a_\sigma P_{\sigma}\ , \quad a_{\sigma}\ge 0 \quad \textrm{and}\quad \sum_{\sigma\in {\mathcal S}_n} a_\sigma=1\, ,
		$$
		where $P_\sigma$ is the permutation matrix associated to $\sigma\in {\mathcal S}_n$. There exists in particular $\sigma^*$ with $a_{\sigma^*}>0$ and $P_{\sigma^*}\in {\mathcal E}_{\Delta}$. Let $P_{\sigma^*}=(P_{ij})_{i,j\in [n]}$ then matrix $Y=(iP_{ij})_{i,j\in [n]}$ has distinct singular values $(1,\cdots, n)$. This completes the proof.

	\end{proof}
	
\end{appendices}

\bibliography{sn-biblio.bib}

\end{document}